\setlist[enumerate]{topsep=2pt, parsep=0pt, partopsep=0pt, itemsep=3pt, leftmargin=3em, label=\rm{(\roman*)}}
\let\oldleq\leq
\let\oldsubset\subset
\let\oldcap\cap
\let\oldin\in
\let\leq\oldleq
\let\subset\oldsubset
\let\cap\oldcap
\let\in\oldin
\numberwithin{equation}{section}
\newtheorem{theorem}{Theorem}[section]
\newtheorem{lemma}[theorem]{Lemma}
\newtheorem{proposition}[theorem]{Proposition}
\newtheorem{corollary}[theorem]{Corollary}
\newtheorem*{theorem*}{Theorem}
\newtheorem*{remark*}{Remark}
\newtheorem*{assumption*}{Assumption}
\theoremstyle{definition}
\newtheorem{definition}[theorem]{Definition}
\theoremstyle{remark}
\renewcommand{\l}{\left}
\renewcommand{\r}{\right}
\newcommand{\eps}{\varepsilon}
\newcommand{\N}{{\mathbb N}}
\newcommand{\R}{{\mathbb R}}
\newcommand{\C}{{\mathbb C}}
\newcommand{\Z}{{\mathbb Z}}
\newcommand{\T}{{\mathbb T}}
\renewcommand{\Re}{\operatorname{Re}}
\renewcommand{\Im}{\operatorname{Im}}
\newcommand{\ds}{\displaystyle}
\newcommand{\pt}{\partial}
\newcommand{\cleq}{\lesssim}
\newcommand{\wto}{\rightharpoonup}%
\def\tbra[#1,#2]{\left\langle #1 , #2\right\rangle} 
\def\rbra[#1,#2]{\left( #1 , #2 \right)} 
\newcommand{\ce}{\mathrel{\mathop:}=}
\def\norm[#1]{\left\Vert #1 \right\Vert}
\def\abs[#1]{\left\vert #1 \right\vert}
\newcommand{\cH}{{\mathcal H}}
\newcommand{\scG}{{\mathscr G}}
\begin{document}
\title[]{Global $H^2$-solutions for the generalized derivative NLS on $\T$}

\author[M. Hayashi]{Masayuki Hayashi}
\address{Graduate School of Human and Environmental Studies,
Kyoto University, Kyoto 606-8501, Japan.
\newline\indent
Waseda Research Institute for Science and Engineering, Waseda University, Tokyo 169-8555, Japan
}
 \email{hayashi.masayuki.3m@kyoto-u.ac.jp}

\author[T. Ozawa]{Tohru Ozawa}
\address{Department of Applied Physics\\ Waseda University\\ Tokyo
  169-8555\\  Japan}
\email{txozawa@waseda.jp}

\author[N. Visciglia]{Nicola Visciglia}
\address{Dipartimento di Matematica, Universit\`a di Pisa, Largo Bruno Pontecorvo, 5 56127 Pisa, Italy
}%
\email{nicola.visciglia@unipi.it}

\date{\today}

\begin{abstract}
We prove global existence of $H^2$ solutions to the Cauchy problem for the generalized derivative nonlinear Schr\"odinger equation on the 1-d torus. 
This answers an open problem posed by Ambrose and Simpson \cite{AS15}. 
The key is the extraction of the terms that cause the problem in energy estimates and the construction of suitable energies so as to cancel the problematic terms out by effectively using integration by parts and the equation.
\end{abstract}

\maketitle
\setcounter{tocdepth}{1}
\tableofcontents

\section{Introduction}
\label{sec:1}
We consider the Cauchy problem for the generalized derivative nonlinear Schr\"odinger equation
\begin{align}
\label{eq:1.1}
\l\{
\begin{aligned}
&i\pt_t u+\pt_x^2 u+i|u|^{2\sigma}\pt_xu=0,
\\
&u_{\mid t=0}=\varphi ,
\end{aligned}
\r.
\quad (t,x)\in\R\times\T,~\sigma>1,
\end{align}
where $\T\ce\R/\Z$.
The following quantities are formally invariant by the flow of \eqref{eq:1.1}:
\begin{align*}
\norm[u(t)]_{L^2(\T)}^2=\norm[\varphi]_{L^2(\T)}^2,
\quad E(u(t))=E(\varphi),
\end{align*}
where the energy $E(u)$ is defined by
\begin{align*}
E(u)=\frac{1}{2}\norm[\pt_xu]_{L^2(\T)}^2+\frac{1}{2\sigma+2}
\Im\int_{\T}|u|^{2\sigma}\pt_xu\widebar{u}dx.
\end{align*}
When $\sigma=1$, the equation corresponds to the standard derivative nonlinear Schr\"odinger equation, which is known to be  completely integrable (\cite{KN78}). There is a vast literature in this case and here we only refer to the recent results \cite{KVpre, H21} and references therein.
In this paper we are interested in the case $\sigma>1$ including noninteger powers. We note the scaling property: if we consider the equation \eqref{eq:1.1} on the line $\R$, the equation is invariant under the transformation
\begin{align*}
u_{\lambda}(t,x)= \lambda^{\frac{1}{2\sigma}} u(\lambda^2 t,\lambda x),\quad\lambda>0,
\end{align*}
which implies that the critical Sobolev exponent is $s_c=\frac{1}{2}-\frac{1}{2\sigma}$. In particular, when $\sigma>1$, the equation is $L^2$ supercritical.

The equation \eqref{eq:1.1} has attracted attention since the interesting numerical results \cite{LSS13a, LSS13b} by Liu, Simpson and Sulem.
The mathematical study of \eqref{eq:1.1} has been considered, regarding the Cauchy problem \cite{Hao07, AS15, S15, HO16, LPS19a, LPS19b, PTpre}, global properties of solutions \cite{CSS17, FHI17, BWX20}, and stability/instability of solitary waves \cite{LSS13a, F17, G18, LNpre}. We note that most of these results are on the line.  
Ambrose and Simpson \cite{AS15} proved that for any $\varphi\in H^2(\T)$ there exists a unique solution $u\in C([0,T_{\max}),H^2(\T))$ of \eqref{eq:1.1}, where $[0,T_{\rm max})$ is the maximal existence interval of the $H^2(\T)$ solution, and that the standard blowup alternative holds: $T_{\rm max}=\infty$, or $T_{\rm max}<\infty$ implies $\lim_{t\to T_{\rm max}}\norm[u(t)]_{H^2(\T)}=\infty$. The main results of \cite{AS15} concern the local Cauchy theory by a compactness argument,
but the global existence of $H^2(\T)$ solutions has remained unsolved. In this paper we study the global Cauchy problem  for \eqref{eq:1.1} in the $H^2(\T)$ setting.

Our main result is the following.
\begin{theorem}
\label{thm:1.1}
Let $\varphi\in H^2(\T)$. For the maximal solution $u\in C([0,T_{\rm max}), H^2(\T))$ to \eqref{eq:1.1}, we have the following alternative:
\begin{enumerate}
\item $T_{\rm max}=\infty$,

\item $T_{\rm max}<\infty$ implies $\limsup_{t\uparrow T_{\rm max}}\norm[u(t)]_{H^1(\T)}=\infty$. 
\end{enumerate}
The same alternative also holds true for the negative time direction.
\end{theorem}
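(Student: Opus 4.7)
The plan is to prove the stronger quantitative a priori bound that the $H^2$ norm is controlled by the $H^1$ norm on the whole maximal interval: there exists a continuous nondecreasing function $\Phi:[0,\infty)\to[0,\infty)$ with
\[
\norm[u(t)]_{H^2(\T)}\leq \Phi\bigl(\sup_{s\in[0,t]}\norm[u(s)]_{H^1(\T)}\bigr)\bigl(1+\norm[\varphi]_{H^2(\T)}\bigr),\qquad t\in[0,T_{\max}).
\]
Given such an estimate, the Ambrose--Simpson $H^2$ continuation criterion recalled above immediately forces the stated dichotomy: if $T_{\max}<\infty$ but $\norm[u]_{H^1}$ remained bounded, then $\norm[u]_{H^2}$ would also stay bounded, contradicting the blowup alternative of \cite{AS15}. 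The negative-time case follows because $v(t,x)\ce \widebar{u(-t,-x)}$ solves \eqref{eq:1.1} with an $H^2(\T)$ datum.

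\textbf{Step 1 (isolating the obstruction).} On a smooth approximate solution I would compute $\frac{d}{dt}\norm[\pt_x^2 u]_{L^2}^2=2\Re\tbra[\pt_t\pt_x^2 u,\pt_x^2 u]$, substitute $\pt_t u=i\pt_x^2 u-|u|^{2\sigma}\pt_x u$ and expand by the Leibniz rule. The linear part vanishes because $\Re\tbra[i\pt_x^4 u,\pt_x^2 u]=0$, and in $-2\Re\tbra[\pt_x^2(|u|^{2\sigma}\pt_x u),\pt_x^2 u]$ the derivatives landing on $\pt_x^2 u$ are redistributed by integration by parts. The outcome is an obstructing contribution of the form
\[
c_\sigma\int_{\T} \pt_x(|u|^{2\sigma})\,\abs[\pt_x^2 u]^2\,dx
\]
plus remainders carrying at most one factor of $\pt_x^2 u$ in $L^2$, which are directly controlled by $C(\norm[u]_{H^1})(1+\norm[\pt_x^2 u]_{L^2}^2)$ via $H^1(\T)\hookrightarrow L^\infty(\T)$ and Gagliardo--Nirenberg. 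The displayed integral cannot be handled directly because $\pt_x(|u|^{2\sigma})\sim |u|^{2\sigma-1}\pt_x u$ is not bounded in $L^\infty$ by $\norm[u]_{H^1}$ alone.

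\textbf{Step 2 (modified energy).} I would then construct a corrector $G(u)$ as a finite linear combination of quartic-in-derivative integrals such as
\[
\int_{\T}|u|^{\alpha}\,\Im(\bar u\,\pt_x u)\,\abs[\pt_x u]^2\,dx,\qquad \int_{\T}|u|^{\beta}\,\abs[\pt_x u]^4\,dx,
\]
and analogous monomials, with homogeneity in $u$ matching that of the obstruction. When $\frac{d}{dt}G(u)$ is computed and every occurrence of $\pt_t\pt_x u$ is replaced using $\pt_x$ of \eqref{eq:1.1}, the $i\pt_x^3 u$ term, after one integration by parts, produces expressions of the type $\int(\,\cdot\,)\abs[\pt_x^2 u]^2\,dx$ of exactly the same algebraic form as the obstructing integral of Step~1. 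Adjusting the numerical coefficients in $G(u)$ to cancel that integral yields the modified energy $\mathcal{E}(u)\ce\norm[\pt_x^2 u]_{L^2}^2+G(u)$, which is coercive because $\abs[G(u)]\leq C(\norm[u]_{H^1})$ by Sobolev, whence $\norm[\pt_x^2 u]_{L^2}^2\lesssim \mathcal{E}(u)+C(\norm[u]_{H^1})$.

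\textbf{Step 3 (closure, rigor, and the main obstacle).} After the cancellation, every remaining term in $\frac{d}{dt}\mathcal{E}(u)$ is multilinear in $u,\pt_x u,\pt_x^2 u$ with at most one $\pt_x^2 u$ factor in $L^2$, and is bounded by $C(\norm[u]_{H^1})(1+\mathcal{E}(u))$ via Cauchy--Schwarz and Sobolev embeddings; Gronwall then closes the a priori estimate on $[0,T_{\max})$. The formal computations would be rigorously justified by first regularizing the datum to $H^s$ with $s$ large, running the identity on the resulting classical solution (for which $\pt_x^4 u$ exists pointwise), and passing to the limit using the $H^2$ continuous dependence from \cite{AS15}. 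The main obstacle is Step~2: for $\sigma=1$ the analogous cancellation is algebraic and can be realized by a gauge transformation, but for arbitrary real $\sigma>1$ no such substitution is available, so both the admissible correcting monomials and the sharp numerical coefficients have to be discovered by direct computation. That such a $G(u)$ exists at all is the nontrivial algebraic content of Theorem~\ref{thm:1.1}.
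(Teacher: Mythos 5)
Your overall strategy --- a modified $H^2$ energy whose time derivative is bounded by $C(\norm[u]_{H^1})(1+\norm[u]_{H^2}^2)$, closed by Gronwall's lemma and the Ambrose--Simpson blowup alternative --- is exactly the paper's. But there are two concrete gaps. First, Step 1 misidentifies the obstruction: besides $\int\pt(|u|^{2\sigma})\abs[\pt^2 u]^2$ (the paper's $B_1$) the expansion produces a second, independent bad term $B_2=\sigma\Re\int(\pt^2 u)^2\,\pt\widebar{u}\,\widebar{u}\,|u|^{2(\sigma-1)}$ carrying \emph{two} unconjugated factors of $\pt^2 u$, so your claim that every remaining term has at most one factor of $\pt^2 u$ in $L^2$ is false. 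This matters because $B_1$ and $B_2$ require different correctors, and the corrector cancelling $B_2$ spawns a third bad term $B_3$ which needs yet another corrector. Correspondingly, the ansatz in Step 2 is off: a derivative count (the bad terms carry five derivatives, and the substitution $\pt_t\mapsto i\pt^2$ adds two) forces correctors with exactly three derivatives, so $\int|u|^{\beta}\abs[\pt u]^4$ cannot contribute at the right order; moreover the correctors that actually do the job contain $\pt^2 u$ explicitly, namely $\Im\int\pt^2\widebar{u}\,\pt u\,|u|^{2\sigma}$ and $\Im\int\pt^2 u\,\pt u\,\widebar{u}^2|u|^{2(\sigma-1)}$ --- only the third corrector, $\Im\int(\pt u)^3\widebar{u}^3|u|^{2(\sigma-2)}$, is of the purely first-order type you propose. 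Since there are essentially only two independent cubic-in-$\pt u$ monomials with the correct homogeneity, there is no reason the resulting linear system of coefficients can cancel all of $B_1$, $B_2$, $B_3$; with the paper's basis it can.

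Second, and independently, Step 3 skips the hardest analytic point of the proof. The energy identity must be run on approximate solutions $u_\eps$ (the paper uses a Fourier-truncated equation so that $u_\eps\in C^1(\R,H^\infty)$), and to exploit it one needs the uniform bound $\sup_{\eps}\sup_{t\in[0,T]}\norm[u_\eps(t)]_{H^1}<\infty$ on every $[0,T]\subset[0,T_{\max})$. This does \emph{not} follow from the assumed bound on $\norm[u(t)]_{H^1}$, because $H^1$ well-posedness --- hence any $H^1$ stability of the flow --- is open for $\sigma>1$; and ``passing to the limit using $H^2$ continuous dependence'' is circular, since the uniform $H^2$ bound is precisely what is being proved. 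The paper resolves this with a separate local theory, uniform in $\eps$, at the intermediate regularity $H^s$ with $s\in(3/2,2)$ (Proposition \ref{prop:3.3} and Appendix \ref{sec:A}), combined with an iteration over finitely many subintervals of length $T_0(M)$. Some such device is indispensable for large data (for small data the conserved energy suffices, as in Section \ref{sec:3.3}), and it is missing from your plan.
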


As a corollary, we prove the following global existence of $H^2(\T)$ solutions for \eqref{eq:1.1} under the smallness condition on the initial data, which proves the conjecture in \cite[Section 5]{AS15}.
\begin{corollary}
\label{cor:1.2}
There exists $\delta>0$ such that if $\varphi\in H^2(\T)$ satisfies $\norm[\varphi]_{H^1(\T)}<\delta$, then there exists an unique global solution $u\in C(\R, H^2(\T))$ to \eqref{eq:1.1}.
\end{corollary}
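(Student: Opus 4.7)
The plan is to use Theorem~\ref{thm:1.1} to reduce global existence to an a priori bound on $\norm[u(t)]_{H^1(\T)}$, and to establish such a bound using the conservation of $L^2$-mass and of $E$ together with a Gagliardo--Nirenberg estimate on the nonlinear part of $E$. By Theorem~\ref{thm:1.1}(ii), it suffices to prove that $\limsup_{t\uparrow T_{\rm max}}\norm[u(t)]_{H^1(\T)}<\infty$ whenever $\norm[\varphi]_{H^1(\T)}\leq\delta$ for $\delta$ small, and similarly in the past.

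A standard regularization argument applied to the $H^2$-flow of \cite{AS15} gives the conservation laws $\norm[u(t)]_{L^2(\T)}=\norm[\varphi]_{L^2(\T)}\ec m$ and $E(u(t))=E(\varphi)$. Writing
\begin{align*}
N(u)\ce \frac{1}{2\sigma+2}\Im\int_\T |u|^{2\sigma}\pt_x u\,\widebar{u}\,dx,
\end{align*}
the Gagliardo--Nirenberg inequality on $\T$ with exponent $p=4\sigma+2$, combined with H\"older, gives
\begin{align*}
\abs[N(u)]\cleq \norm[u]_{L^{4\sigma+2}(\T)}^{2\sigma+1}\norm[\pt_x u]_{L^2(\T)}\cleq \l(\norm[\pt_x u]_{L^2}^{\sigma}m^{\sigma+1}+m^{2\sigma+1}\r)\norm[\pt_x u]_{L^2}.
\end{align*}
Setting $f(t)\ce \norm[\pt_x u(t)]_{L^2}$, invoking $E(u(t))=E(\varphi)$, and combining with the elementary bound $\abs[E(\varphi)]\cleq \norm[\varphi]_{H^1}^2+\norm[\varphi]_{H^1}^{2\sigma+2}\leq C\delta^2$ (valid for $\delta\leq 1$), one obtains
\begin{align*}
f(t)^2\leq C\delta^2+C\delta^{\sigma+1}f(t)^{\sigma+1}+C\delta^{2\sigma+1}f(t),\qquad t\in[0,T_{\rm max}).
\end{align*}
Since $\sigma>1$, the two correction terms on the right are of strictly higher order than $\delta^2$ in the regime $f\sim\delta$, so a standard continuity bootstrap starting from $f(0)\leq\delta$ yields $f(t)\leq C\delta$ throughout $[0,T_{\rm max})$ provided $\delta$ is sufficiently small. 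Combined with $\norm[u(t)]_{L^2}\leq \delta$, this gives the desired uniform $H^1$-bound; the argument runs identically in the negative time direction.

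The main obstacle is precisely that $E$ is not coercive: the nonlinear term $N(u)$ has no favourable sign and cannot be absorbed by the kinetic part in general. What rescues the argument is the $L^2$-supercriticality $\sigma>1$: the Gagliardo--Nirenberg estimate then makes $N(u)$ of strictly higher order than $\norm[\pt_x u]_{L^2}^2$ once $\norm[u]_{L^2}$ is small, and the $H^1$-smallness of $\varphi$ provides exactly this $L^2$-smallness through mass conservation. In the $L^2$-critical case $\sigma=1$ a small-data global result of this type would require additional structural input beyond the energy method.
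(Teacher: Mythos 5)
Your proof is correct, and the mechanism is the one the paper uses: conservation of mass and energy plus smallness plus a continuity argument to trap the $H^1$ norm, after which Theorem \ref{thm:1.1} gives globality. The execution differs in two ways worth noting. First, the paper (Section \ref{sec:3.3}) runs the conservation-law argument on the regularized flow \eqref{eq:2.1}, because its proof of Theorem \ref{thm:1.1} (via Lemma \ref{lem:3.2}) needs a uniform-in-$\eps$ bound on the approximate solutions $u_\eps$; you instead treat Theorem \ref{thm:1.1} as a black box and bound the exact $H^2$ solution directly, which is legitimate for the corollary and slightly cleaner, provided one accepts (as is standard for $H^2$ solutions) that mass and energy are conserved for the exact flow. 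Second, the paper absorbs the nonlinear part of the energy with the crude bound $\abs[N(u)]\cleq\norm[u]_{H^1}^{2\sigma+2}$ and concludes by showing the $H^1$ norm stays below the maximum point $m$ of $h(s)=s^2/2-cs^{2\sigma+2}/(2\sigma+2)$, whereas you use the sharper Gagliardo--Nirenberg bound $\abs[N(u)]\cleq(\norm[\pt_xu]_{L^2}^{\sigma}m^{\sigma+1}+m^{2\sigma+1})\norm[\pt_xu]_{L^2}$ together with a bootstrap; your version isolates that only the $L^2$ mass needs to be small to make $N$ subordinate, which the paper's argument does not exhibit. One small caveat: your closing claim that the argument would fail at $\sigma=1$ is not accurate --- there the extra term is $C\delta^2f^2$, which is absorbed into the left-hand side for $\delta$ small, so the same energy method closes; this aside does not affect the proof of the stated corollary.
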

When $\sigma=1$, our results may be considered to correspond to \cite[Theorem 2]{TF81}, whose proof, however, relies on the $H^2$ conservation law that follows from the integrability structure. We cannot expect an integrability structure when $\sigma>1$, so the problem becomes much more delicate.

The main difficulty in order to 
get a global existence result in $H^2(\T)$ is to establish an energy estimate that allows to apply a globalization argument together with Gronwall's lemma. Indeed it is not difficult to check that for solutions to \eqref{eq:1.1}
the following estimate holds:
\begin{align*}
\frac d{dt} \|u\|_{H^2(\T)}^2 \leq C \|u\|_{H^2(\T)}^2\|u\|_{W^{1,\infty}(\T)} \|u\|_{H^1(\T)}^{2\sigma-1}. 
\end{align*}
However, this estimate is useless for the desired globalization even if we assume an a priori uniform bound on the $H^1(\T)$ norm of the solution. Our idea is to compute a more sophisticated energy ${\mathcal E}(u)$
(see Theorem \ref{thm:2.4} and Section \ref{sec:4}), which at leading order is equivalent to the $H^2(\T)$-norm such that 
we get the bound:
\begin{align*}
\abs[ \frac d{dt} {\mathcal E}(u)]  \leq C \|u\|_{H^2(\T)}^2 f(\|u\|_{H^{1}(\T)}),
\end{align*}
where $f\in {\mathcal C}(\R, \R)$.
The idea to modify high Sobolev norms with lower order perturbations, in order to get 
cancellation of the bad interaction along the computation of the associated energy estimate, has been extensively used in the literature, we quote for shortness  a few of them \cite{CO15, G89, HO87, OV16, PTV17, T17, V21}. From a technical viewpoint in order to justify the manipulations that we need to do, we have to work on the regularized equation associated with \eqref{eq:1.1} which admits smooth solutions and hence we can compute at that level all the derivatives that we need, and at the end we transfer those bounds at the level of the original equation \eqref{eq:1.1}. 
Since the nonlinear terms involve derivatives and non-integer powers, the construction and justification of the modified energy ${\mathcal E}(u)$ requires a more delicate discussion than previous literature.

The argument in this paper holds for the case of the line in the same way. However, in the case of the line, Theorem \ref{thm:1.1} can be easily proven by applying the wellposedness result of $H^1(\R)$. Indeed, it is proved in \cite{HO16} that for any initial data $\varphi\in H^1(\R)$ there exists a unique solution in the class
\begin{align}
\label{eq:1.2}
C([0,T], H^1(\R)) \cap L^4([0,T], W^{1,\infty}(\R))\quad\text{for}~T=T(\norm[\varphi]_{H^1(\R)}) 
\end{align}
for the equation \eqref{eq:1.1} on the line. This enables us to control the time integral of the norm $\norm[u]_{W^{1,\infty}(\R)}$ from the boundedness of $H^1(\R)$ norm of the solution, and together with the energy estimates in $H^2(\R)$, one can prove the theorem. The construction of solutions in the class of \eqref{eq:1.2} is obtained by combining gauge transformations and Strichartz estimates, which is inspired by the works \cite{H93, HO92, HO94a} for the standard derivative NLS equation ($\sigma=1$).

In the case of the torus, it is known that Strichartz estimates involve a loss of derivatives (see \cite{B93, BGT04}), so we cannot expect a solution to be constructed in the class of \eqref{eq:1.2} rewritten to the torus. We note that the $H^1(\T)$ wellposedness for general $\sigma>1$ remains an open problem.

The rest of the paper is organized as follows. In Section \ref{sec:2} we introduce the approximate equation for \eqref{eq:1.1} and compute suitable energies in the $H^2(\T)$ setting. The main purpose of this section is to derive the $H^2(\T)$ identity for the approximate equation, which is the key to the proof of our main theorem. In Section \ref{sec:3} we prove the global existence of $H^2(\T)$ solutions to \eqref{eq:1.1}, based on the modified energy identity in the previous section. In the case of large initial data, in order to obtain the uniform $H^1$ boundedness of approximate solutions, we use a somewhat delicate argument, such as dividing the time interval and extending the solution in a finite number of times (see Section \ref{sec:3.5} for the necessity of this argument).
We see that the uniform estimate in $H^s$ for $s\in(3/2,2)$ are useful in this argument, and provide a self-contained proof of this estimate in Appendix \ref{sec:A}, which may be of independent interest. In Section \ref{sec:4} we explain how modified energies in the key $H^2(\T)$ identity are derived from a heuristic discussion.


\subsection*{Notation} 
For $f,g\in L^2(\T)$, the standard inner product is defined by
\begin{align*}
\rbra[f,g]_{L^2(\T)} =\int_\T f(x)\widebar{g(x)} dx.
\end{align*}
The Fourier transform on the torus is defined by
\begin{align*}
\hat{f}(n) =\int_{\T} f(x) e^{-2\pi inx}dx,\quad n\in\Z.
\end{align*}
The Sobolev spaces $H^s(\T)$ on the torus are defined via the norm
\begin{align*}
\norm[f]_{H^s(\T)}^2 = \sum_{n\in\Z}\l(1+|n|^2\r)^s |\hat f(n)|^2\quad\text{for}~s\in\R
\end{align*}
and $H^\infty(\T)\ce \bigcap_{m\in\N} H^m(\T)$.
The homogeneous Sobolev spaces are defined in a similar way:
\begin{align*}
\norm[f]_{\dot{H}^s(\T)}^2 = \sum_{n\in\Z}|n|^{2s} |\hat f(n)|^2\quad\text{for}~s\in\R.
\end{align*}
From the next section onwards, we will write
\begin{align*}
H^s=H^s(\T),\quad L^p=L^p(\T)
\end{align*}
for every $s\in\R$ and every $p\in[1,\infty]$.
We may also write $\pt=\pt_x$ and  
\begin{align*}
\int v =\int_{\T} v(t)=\int_{\T} v(t,x)dx
\end{align*}
for any time-dependent function $v(t,x)$.

We use $A\cleq B$ to denote the inequality $A\leq CB$ for some constant $C>0$. The dependence of $C$ is usually clear from the context and we often omit this dependence. We sometimes denote by $C=C(*)$ a constant depending on the quantities appearing in parentheses to clarify the dependence.
%

\section{Modified energies for approximate problems}
\label{sec:2}
The key for the proof of Theorem \ref{thm:1.1} is to compute suitable energies for $H^2$ solutions 
to \eqref{eq:1.1}. To justify this procedure, we need to consider approximate problems because higher-order derivatives appear in the intermediate computations. Our aim in this section is to derive the $H^2$ identity for the approximate equation.

\subsection{Approximate equation}
According to \cite{AS15}, we introduce the cutoff operator in Fourier space as
\begin{align*}
(J_\eps f)(x)\ce 
\sum_{\substack{n\in\Z\\|n|\leq1/\eps}}\hat{f}(n) e^{2\pi i nx},
\quad x\in\T
\end{align*}
for $\eps\in(0,1)$. 
The basic properties of $J_\eps$ can be summarized as follows.
\begin{lemma}
\label{lem:2.1}
For $\eps\in(0,1)$, $f,g\in L^2$, and $s>0$, the following properties hold: 
\begin{enumerate}
\item $\ds J_\eps^2=J_\eps$,
\item $\rbra[J_\eps f,g]_{L^2}=\rbra[f,J_\eps g]_{L^2}$,
\item $\norm[J_\eps f]_{L^2}\leq\norm[f]_{L^2}$,
\item $\norm[J_\eps f]_{H^s}\cleq \eps^{-s}\norm[f]_{L^2}$,
\item $\norm[J_\eps f-f]_{H^s}\to 0$ as $\eps\downarrow0$ for any $f\in H^s$.
\end{enumerate}
\end{lemma}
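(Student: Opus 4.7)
The plan is to work entirely on the Fourier side, where $J_\eps$ is multiplication by the real-valued indicator symbol $m_\eps(n) \ce \chi_{\{|n|\leq 1/\eps\}}(n)$. Once this is set up, each of (i)--(v) reduces to a pointwise property of $m_\eps$ combined with Plancherel's identity on $\T$, and no substantial obstacle is expected.

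I would first dispatch (i)--(iii) simultaneously. Property (i) follows from the idempotency $m_\eps^2 = m_\eps$ of an indicator; for (ii) I would apply Plancherel together with the real-valuedness of the symbol, writing
\[
\rbra[J_\eps f,g]_{L^2} = \sum_{n\in\Z} m_\eps(n)\, \hat f(n)\, \overline{\hat g(n)} = \rbra[f,J_\eps g]_{L^2};
\]
and (iii) is immediate from Plancherel combined with $0\leq m_\eps\leq 1$.

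For (iv), I would observe that on the support of $m_\eps$ one has $|n|\leq 1/\eps$, hence $(1+|n|^2)^s\leq (1+\eps^{-2})^s\cleq \eps^{-2s}$ since $\eps\in(0,1)$. Summing over $|n|\leq 1/\eps$ then yields $\norm[J_\eps f]_{H^s}^2 \cleq \eps^{-2s}\norm[f]_{L^2}^2$ without assuming any Sobolev regularity of $f$.

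Finally, (v) is the only statement that actually uses the hypothesis $f\in H^s$. I would write
\[
\norm[J_\eps f - f]_{H^s}^2 = \sum_{|n|>1/\eps}(1+|n|^2)^s|\hat f(n)|^2,
\]
recognize this as the tail of the convergent series defining $\norm[f]_{H^s}^2$, and conclude convergence to zero as $\eps\downarrow 0$. If anything, the only point worth flagging is the contrast between (iv) and (v): (iv) is uniform in $f\in L^2$ at the cost of a loss $\eps^{-s}$, whereas (v) is purely qualitative but requires $f$ to already live in $H^s$.
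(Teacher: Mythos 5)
Your proof is correct; all five properties follow, as you show, from Plancherel and elementary pointwise estimates on the Fourier multiplier $\chi_{\{|n|\leq 1/\eps\}}$. The paper states Lemma \ref{lem:2.1} without proof, treating these as standard facts, and your argument is exactly the routine verification the authors have in mind.
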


We consider the approximate equation for \eqref{eq:1.1}:
\begin{align}
\label{eq:2.1}
\l\{
\begin{aligned}
&i\pt_t u_\eps+\pt_x^2 u_\eps+iJ_\eps\l(|J_\eps u_\eps|^{2\sigma}\pt_xJ_\eps u_\eps\r)=0,
\\
&{u_\eps}_{\mid t=0}=J_\eps\varphi ,
\end{aligned}
\r.
\quad (t,x)\in\R\times\T,
\end{align}
where $\eps\in(0,1)$. The existence and uniqueness for this approximate equation is easily obtained by the standard argument.
\begin{lemma}
\label{lem:2.2}
Let $\eps\in(0,1)$. For any $\varphi\in L^2$ there exists a unique solution $u_\eps\in C(\R, L^2)$ to \eqref{eq:2.1}. Moreover, $u_\eps \in C^1(\R,H^\infty)$ and $\norm[u_\eps(t)]_{L^2}^2=\norm[u_\eps(0)]_{L^2}^2$ for all $t\in\R$.
\end{lemma}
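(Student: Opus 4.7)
The plan is to treat \eqref{eq:2.1} via Duhamel's formula as a fixed-point problem in $C([0,T],L^2)$ and to exploit that the range of $J_\eps$ is the finite-dimensional space
\begin{align*}
V_\eps\ce\{f\in L^2:\hat f(n)=0\text{ for }|n|>1/\eps\}
\end{align*}
of trigonometric polynomials. By Lemma~\ref{lem:2.1}(i) the outer $J_\eps$ on the nonlinearity places it inside $V_\eps$; the initial datum $J_\eps\varphi$ also lies in $V_\eps$; and the linear propagator $e^{it\pt_x^2}$ preserves $V_\eps$. Thus \eqref{eq:2.1} is essentially an ODE on a finite-dimensional subspace, and the claim $u_\eps\in C^1(\R,H^\infty)$ will eventually reduce to the equivalence of Sobolev norms on $V_\eps$.

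For local existence and uniqueness I would apply Banach's fixed-point theorem to
\begin{align*}
\Phi(u)(t)\ce e^{it\pt_x^2}J_\eps\varphi-\int_0^t e^{i(t-s)\pt_x^2}J_\eps\bigl(|J_\eps u(s)|^{2\sigma}\pt_x J_\eps u(s)\bigr)\,ds
\end{align*}
on a closed ball of $C([0,T],L^2)$. Combining Lemma~\ref{lem:2.1}(iii)--(iv) with the Sobolev embedding $H^1\hookrightarrow L^\infty$ gives $\norm[J_\eps u]_{L^\infty}\cleq\eps^{-1}\norm[u]_{L^2}$ and $\norm[\pt_x J_\eps u]_{L^2}\leq\eps^{-1}\norm[u]_{L^2}$. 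Since $\sigma>1$ makes $z\mapsto|z|^{2\sigma}$ of class $C^2$ on $\C$ and hence locally Lipschitz, a standard telescoping yields a nonlinear Lipschitz bound of the form $C(\eps,R)\norm[u-v]_{L^2}$ on any ball of radius $R$, and the contraction argument produces a unique local solution in $C([0,T],L^2)$ for some $T=T(\eps,\norm[\varphi]_{L^2})>0$.

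To promote this to a global solution I would verify $L^2$-conservation directly. The contribution from $i\pt_x^2u_\eps$ to $\tfrac{d}{dt}\norm[u_\eps]_{L^2}^2$ is purely imaginary after integration by parts and drops out when taking $\Re$. For the nonlinear part, self-adjointness of $J_\eps$ (Lemma~\ref{lem:2.1}(ii)) and the elementary identity $\overline{J_\eps f}=J_\eps\bar f$ reduce the computation to $v\ce J_\eps u_\eps$ and the cancellation
\begin{align*}
\Re\int|v|^{2\sigma}\pt_xv\cdot\widebar{v}\,dx=\frac{1}{2}\int|v|^{2\sigma}\pt_x|v|^2\,dx=\frac{1}{2\sigma+2}\int\pt_x|v|^{2\sigma+2}\,dx=0.
\end{align*}
The resulting uniform $L^2$-bound rules out finite-time blow-up, and the standard continuation criterion extends $u_\eps$ to all of $\R$.

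Finally, for the regularity $u_\eps\in C^1(\R,H^\infty)$ I would apply $I-J_\eps$ to \eqref{eq:2.1}: by $J_\eps^2=J_\eps$ this annihilates the nonlinear term, so $(I-J_\eps)u_\eps$ solves a free Schr\"odinger equation with zero initial datum and therefore vanishes identically. Hence $u_\eps(t)\in V_\eps$ for all $t$, and the equivalence of Sobolev norms on the finite-dimensional $V_\eps$ upgrades $u_\eps\in C(\R,L^2)$ to $u_\eps\in C(\R,H^\infty)$; reading $\pt_tu_\eps$ off the equation then gives $u_\eps\in C^1(\R,H^\infty)$. The only mildly delicate technical point is the non-integer power $|z|^{2\sigma}$, but $\sigma>1$ grants enough smoothness to justify every composition, so I do not expect any serious obstacle at this stage.
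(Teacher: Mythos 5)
Your proposal is correct and follows essentially the same route as the paper: a contraction-mapping argument for the Duhamel formulation in $C([0,T],L^2)$ using the Lipschitz character of $g_\eps$ on bounded sets of $L^2$, globalization via the cancellation $\Re\int|v|^{2\sigma}\pt_x v\,\widebar{v}\,dx=0$, and the observation that the solution stays in the band-limited range of $J_\eps$ (the paper reads this directly off Duhamel's formula rather than applying $I-J_\eps$ to the equation, but the content is identical). The only cosmetic adjustment is to record this range-invariance, and hence the $H^\infty$ smoothness, on the local existence interval \emph{before} computing $\frac{d}{dt}\norm[u_\eps]_{L^2}^2$, so that the integration by parts in the conservation step is justified rather than formal.
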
 
\begin{proof}
For completeness we give a proof. Similar arguments are done in the proof of \cite[Theorem 3.3.1]{C03}.
We set 
\begin{align}
\label{eq:2.2}
g(u)=i|u|^{2\sigma}\pt_xu,\quad g_\eps(u)=J_\eps g(J_\eps u).
\end{align}
Note that $g_\eps$ is Lipschitz continuous on bounded subsets of $L^2$ for a fixed $\eps\in(0,1)$. By a fixed point theorem, one can prove that for any $\varphi\in L^2$, there exists a unique maximal solution $u_\eps\in C( (-T_1,T_2), L^2)$ with $T_1,T_2\in(0,\infty]$, and if $T_1<\infty$, then $\norm[u_\eps(t)]_{L^2}\to\infty$ as $t\downarrow -T_1$ (respectively, if $T_2<\infty$, then $\norm[u_\eps(t)]_{L^2}\to\infty$ as $t\uparrow T_2$). 
\\
By Duhamel's formula, $u_\eps$ satisfies
\begin{align*}
u_\eps(t)=U(t)J_\eps\varphi+i\int_0^t U(t-s)J_\eps g(J_\eps u_\eps(s))ds
\end{align*} 
for $t\in(-T_1,T_2)$, where $U(t)=e^{it\pt_x^2}$. Then, we obtain from the property of $J_\eps$ that $u_\eps\in C( (-T_1,T_2), H^\infty)$. From the equation \eqref{eq:2.1} we obtain $\pt_t u_\eps\in C( (-T_1,T_2), H^\infty)$, which implies that $u\in C^1( (-T_1,T_2), H^\infty)$.
\\
We note that $g_\eps$ satisfies
\begin{align*}
\Im\int g_\eps(u)\widebar{u}dx=\Im\int g(J_\eps u)\widebar{J_\eps u}dx=0
\end{align*}
for any $u\in L^2$.
From this property and the equation \eqref{eq:2.1}, we obtain the conservation of the $L^2$ norm
\begin{align}
\label{eq:2.3}
\norm[u_\eps(t)]_{L^2}^2=\norm[u_\eps(0)]_{L^2}^2=\norm[J_\eps\varphi]_{L^2}^2
\end{align}
for all $t\in(-T_1,T_2)$, which implies that $T_1=T_2=\infty$.
\end{proof}


\subsection{Modified energies for approximate equations}
\label{sec:2.2}
We first introduce the following terminology.
\begin{definition}
\label{def:2.3}
We define the set $\scG$ of all functionals $G\in C(H^2,\R)$ such that for all $M>0$ there exists $C(M)>0$ such that
\begin{align*}
u\in H^2,~\norm[u]_{H^1}\leq M \implies \abs[G(u)]\leq C(M)(1+\norm[u]_{H^2}^2).
\end{align*}
If $G\in\scG$, then we call the value of the functional $G(u)$ a \emph{good term}. For a time-dependent function $u\in C(\R, H^2)$, we may call $G(u)$ a good term in the sense that $G(u(t))$ is a good term for each $t\in\R$. Roughly speaking, a good term here means a term that does not cause any harm when one derives a priori estimates on $H^2$ by using Gronwall's lemma. 
\end{definition}
Let $\eps\in(0,1)$ and let $u_\eps$ be a unique smooth solution of \eqref{eq:2.1}. We set $v_\eps =J_\eps u_\eps$. The main result in this section is the following.
\begin{theorem}%
\label{thm:2.4}
There exists $G\in\scG$ such that the following identity holds:
\begin{align}
\label{eq:2.4}
\frac{d}{dt}
\begin{aligned}[t]
&\Bigl( \norm[\pt^2u_\eps]_{L^2}^2
-2\Im\int \pt^2\widebar{v}_\eps\pt v_\eps\abs[v_\eps]^{2\sigma} 
-\frac{2\sigma}{\sigma+1}\Im\int\pt^2v_\eps\pt v_\eps\widebar{v}_\eps^{2}\abs[v_\eps]^{2(\sigma-1)}
\\
&{}\qquad+\frac{\sigma(\sigma-1)}{2(\sigma+1)}\Im\int(\pt v_\eps)^3\widebar{v}_\eps^{3}\abs[v_\eps]^{2(\sigma-2)}\Bigr)
=G(v_\eps)
\end{aligned}
\end{align}
for all $t\in\R$. 
\end{theorem}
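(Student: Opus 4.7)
The plan is to differentiate each of the four integrals inside the bracket in \eqref{eq:2.4} and show, via a telescoping sequence of integrations by parts, that the non-good contributions cancel in pairs while the remainder lies in $\scG$.

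First I would reduce the computation to the smooth, band-limited function $v_\eps\ce J_\eps u_\eps$. Applying $J_\eps$ (resp.\ $I-J_\eps$) to \eqref{eq:2.1} and using $J_\eps^2=J_\eps$ yields
\begin{equation*}
i\partial_t v_\eps+\partial^2 v_\eps+iJ_\eps\bigl(|v_\eps|^{2\sigma}\partial v_\eps\bigr)=0,\qquad i\partial_t(I-J_\eps)u_\eps+\partial^2(I-J_\eps)u_\eps=0.
\end{equation*}
The second equation conserves $\|\partial^2(I-J_\eps)u_\eps\|_{L^2}$, so by $L^2$-orthogonality of the two projections one has $\tfrac{d}{dt}\|\partial^2 u_\eps\|_{L^2}^2=\tfrac{d}{dt}\|\partial^2 v_\eps\|_{L^2}^2$, and the entire calculation is performed at the level of $v_\eps$.

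Next I would compute $\tfrac{d}{dt}\|\partial^2 v_\eps\|_{L^2}^2=2\Re\int\partial^2 v_\eps\,\overline{\partial^2\partial_t v_\eps}$. Substituting the equation for $v_\eps$, the linear Schr\"odinger contribution vanishes, and the nonlinear part -- after using self-adjointness of $J_\eps$ together with $J_\eps v_\eps=v_\eps$ -- reduces to $-2\Re\int\partial^2 v_\eps\,\overline{\partial^2(|v_\eps|^{2\sigma}\partial v_\eps)}$. Expanding by Leibniz and using $\partial_x|\partial^2 v_\eps|^2=2\Re(\partial^3 v_\eps\,\overline{\partial^2 v_\eps})$ to integrate by parts the single $\partial^3 v_\eps$ piece isolates exactly one offending integrand, of the form
\begin{equation*}
-2\sigma\Re\int|v_\eps|^{2(\sigma-1)}\bar v_\eps(\partial^2 v_\eps)^2\overline{\partial v_\eps}\,dx,
\end{equation*}
which is not directly controlled by $C(\|v_\eps\|_{H^1})(1+\|v_\eps\|_{H^2}^2)$. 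The other integrals appearing at this stage do admit such a bound via $\|v_\eps\|_{L^\infty}\cleq\|v_\eps\|_{H^1}$ and Gagliardo--Nirenberg-type estimates for the $\partial v_\eps$ factors.

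The core of the argument is now to differentiate the three correction integrals in \eqref{eq:2.4} and verify that this bad integrand propagates through a cancelling chain. For each correction, the $J_\eps$-nonlinear substitution $-J_\eps(|v_\eps|^{2\sigma}\partial v_\eps)$ from the equation only contributes good terms after one integration by parts (moving $J_\eps$ by self-adjointness and $\partial J_\eps=J_\eps\partial$). The linear substitutions $i\partial^2 v_\eps$ are the interesting ones: after two integrations by parts, they produce at each stage the precise counter-term cancelling the obstruction from the previous stage, together with a strictly lower-order obstruction (with one fewer factor of $\partial^2 v_\eps$ and one extra factor of $\partial v_\eps$ and $\bar v_\eps$) to be absorbed by the next correction. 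The cubic integral $\tfrac{\sigma(\sigma-1)}{2(\sigma+1)}\Im\int(\partial v_\eps)^3\bar v_\eps^3|v_\eps|^{2(\sigma-2)}$ completes the chain, since its time derivative cancels the last obstruction and itself contains no $\partial^2 v_\eps$ factor. The coefficients $\tfrac{2\sigma}{\sigma+1}$ and $\tfrac{\sigma(\sigma-1)}{2(\sigma+1)}$ are determined uniquely by these cancellation requirements and are exactly those derived heuristically in Section \ref{sec:4}.

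The principal obstacle is bookkeeping: each time derivative produces many integrands, the expansion of $\partial^2(|v_\eps|^{2\sigma})$ further multiplies these, and the signs and combinatorial coefficients must match exactly at each of the three cancellation stages. A secondary technical point, delicate for $1<\sigma<2$, is that $\partial^2(|v_\eps|^{2\sigma})$ formally contains the factor $|v_\eps|^{2(\sigma-2)}$, which is singular at the zeros of $v_\eps$; the identities are legitimized by replacing $|v_\eps|^{2\sigma}$ with $(|v_\eps|^2+\delta)^\sigma$, running the computation with this regularized nonlinearity, and letting $\delta\downarrow 0$ by dominated convergence -- valid because all surviving integrands are locally integrable for $\sigma>1$.
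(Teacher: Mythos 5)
Your overall strategy---differentiate each piece of the bracket, substitute the equation for $\pt_t v_\eps$, integrate by parts, and tune the coefficients so that the non-good contributions cancel---is the same as the paper's (Lemmas \ref{lem:2.6}--\ref{lem:2.9}), and your reduction to $v_\eps$ via the orthogonal splitting $\norm[\pt^2 u_\eps]_{L^2}^2=\norm[\pt^2 v_\eps]_{L^2}^2+\norm[\pt^2(I-J_\eps)u_\eps]_{L^2}^2$, with the high-frequency part evolving by the free flow, is a correct and slightly cleaner variant of the paper's direct computation. Your treatment of the $J_\eps$-nonlinear substitutions and the $\delta$-regularization of $|v_\eps|^{2\sigma}$ also match the paper.

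There is, however, a genuine error in your inventory of the bad terms produced by $\frac{d}{dt}\norm[\pt^2 v_\eps]_{L^2}^2$. You claim the Leibniz expansion isolates \emph{exactly one} offending integrand, namely the $B_2$-type term $\sigma\Re\int(\pt^2 v_\eps)^2\pt\widebar{v}_\eps\,\widebar{v}_\eps|v_\eps|^{2(\sigma-1)}$, and that all other integrals at this stage are good. This is false: the same computation produces, with coefficient $-4$, the term
\begin{align*}
B_1(v_\eps)=\int\abs[\pt^2 v_\eps]^2\,\pt\l(|v_\eps|^{2\sigma}\r),
\end{align*}
which is \emph{not} in $\scG$; bounding it requires $\norm[\pt v_\eps]_{L^\infty}$, which is not controlled by $\norm[v_\eps]_{H^1}$, and integrating by parts only reintroduces $\pt^3 v_\eps$. (The integrands that really are good are those with a single factor of $\pt^2 v_\eps$ against three factors of first derivatives, as in Lemma \ref{lem:2.5}; any integrand quadratic in $\pt^2 v_\eps$ carrying an additional first derivative, i.e.\ of type $B_1$, $B_2$ or $B_3$, is bad.) This is not cosmetic: the coefficient $-2$ of the first corrector $\Im\int\pt^2\widebar{v}_\eps\pt v_\eps|v_\eps|^{2\sigma}$ is forced by the need to cancel $-4B_1$, since that corrector's time derivative equals $-2B_1-2B_2$ modulo $\scG$; with your count the coefficient would be chosen to kill the $B_2$ contribution alone, the $B_1$ terms would survive, and the telescoping would fail. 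The correct picture is a small linear system in the three bad structures $B_1,B_2,B_3$ (each corrector's derivative involves several of them simultaneously) rather than a chain in which each corrector cancels exactly the single obstruction left by the previous stage. Once the bad-term inventory is corrected, the rest of your outline proceeds as in the paper.
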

The heuristic idea behind the construction of the energy introduced along Theorem \ref{thm:2.4} is explained in Section \ref{sec:4}.
For the rest of this section we will prove Theorem \ref{thm:2.4}.
We rewrite \eqref{eq:2.1} as
\begin{align}
\label{eq:2.5}
\pt_t u_\eps=i\pt^2 u_\eps-J_\eps\l( |J_\eps u_\eps|^{2\sigma}\pt J_\eps u_\eps\r),\quad (t,x)\in\R\times\T.
\end{align}
We begin with the following lemma.
\begin{lemma}
\label{lem:2.5}
The functionals
\begin{align*}
I_k(u)=\Re\int\pt^2\widebar{u}(\pt u)^{3-k}(\pt\widebar{u})^k|u|^{2(\sigma-2)}u^k\widebar{u}^{2-k},
\quad k\in\{0,1,2\},
\end{align*}
which are well-defined on $H^2$, satisfy the property $I_k\in\scG$.
\end{lemma}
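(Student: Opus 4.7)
My plan is to handle the integrand directly by Hölder's inequality, using the one-dimensional Gagliardo--Nirenberg interpolation to convert powers of $\norm[\pt u]_{L^\infty}$ into mixed powers of $\norm[u]_{H^2}$ and $\norm[u]_{H^1}$. This avoids integration by parts entirely.

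For the quantitative bound, the coefficient $h_k(u) \ce |u|^{2(\sigma-2)} u^k \widebar{u}^{2-k}$ satisfies $|h_k(u)| = |u|^{2\sigma - 2}$, so
$$|I_k(u)| \leq \int |\pt^2 u|\, |\pt u|^3\, |u|^{2\sigma - 2}.$$
Placing two of the three $\pt u$ factors in $L^\infty$, the third in $L^2$ against $\pt^2 u$, and the $|u|^{2\sigma-2}$ factor in $L^\infty$, Hölder gives
$$|I_k(u)| \leq \norm[\pt u]_{L^\infty}^2\, \norm[\pt^2 u]_{L^2}\, \norm[\pt u]_{L^2}\, \norm[u]_{L^\infty}^{2\sigma - 2}.$$
Combining the 1D embedding $\norm[u]_{L^\infty} \leq C \norm[u]_{H^1}$ with the Gagliardo--Nirenberg inequality on $\T$,
$$\norm[\pt u]_{L^\infty}^2 \leq C \norm[u]_{H^2} \norm[u]_{H^1} + C \norm[u]_{H^1}^2,$$
yields $|I_k(u)| \leq C \norm[u]_{H^2}^2 \norm[u]_{H^1}^{2\sigma} + C \norm[u]_{H^2} \norm[u]_{H^1}^{2\sigma + 1}$. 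Under the constraint $\norm[u]_{H^1} \leq M$, Young's inequality on the second term produces $|I_k(u)| \leq C(M)(1 + \norm[u]_{H^2}^2)$, which is the defining bound of $\scG$.

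It remains to verify continuity of $I_k$ on $H^2$. Although the exponent $2(\sigma - 2)$ in $|u|^{2(\sigma-2)}$ can be negative when $1 < \sigma < 2$, the specific combination $h_k(u)$ has modulus $|u|^{2\sigma - 2}$ with $2\sigma - 2 > 0$, and hence extends by $0$ to a continuous (in fact uniformly continuous on bounded sets) function $\C \to \C$. Given $u_n \to u$ in $H^2$, the 1D embedding $H^2 \hookrightarrow W^{1,\infty}$ gives $u_n \to u$ in $C^1(\T)$, so $h_k(u_n)$ and the first-derivative factors converge uniformly; combining this with $\pt^2 u_n \to \pt^2 u$ in $L^2$ through a Cauchy--Schwarz splitting of the integrand gives $I_k(u_n) \to I_k(u)$. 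The main subtle point of the argument is the Gagliardo--Nirenberg step: a naive estimate using only $\norm[\pt u]_{L^\infty} \leq C \norm[u]_{H^2}$ would produce $\norm[u]_{H^2}^3$ and fail the quadratic bound built into $\scG$; interpolation with $\norm[u]_{H^1}$ is what recovers the correct power.
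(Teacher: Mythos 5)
Your argument is correct and takes essentially the same route as the paper: bound the integrand by $\abs[\pt^2u]\,\abs[\pt u]^3\abs[u]^{2(\sigma-1)}$, apply H\"older, and use a Gagliardo--Nirenberg interpolation to keep the $H^2$ dependence quadratic --- the paper puts all three first-derivative factors in $L^6$ via $\norm[f]_{L^6}^6\cleq\norm[f]_{H^1}^2\norm[f]_{L^2}^4$, while you split them as $L^\infty\times L^\infty\times L^2$, arriving at the same bound $\norm[u]_{H^2}^2\norm[u]_{H^1}^{2\sigma}$ up to lower-order terms. Your additional continuity verification is sound (and more explicit than the paper, which leaves it implicit).
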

\begin{proof}
Applying the Gagliardo--Nirenberg inequality
\begin{align*}
\norm[f]_{L^6}^6 \cleq \norm[f]_{H^1}^2 \norm[f]_{L^2}^4
\end{align*}
and $H^1\subset L^\infty$, one can estimate
\begin{align*}
\abs[I_k(u)]
&\leq
\int \abs[\pt^2u] \abs[\pt u]^3\abs[u]^{2(\sigma-1)}
\\
&\cleq \norm[\pt^2u]_{L^2}\norm[\pt u]_{L^6}^3\norm[u]_{L^\infty}^{2(\sigma-1)} \cleq \norm[u]_{H^2}^2 \norm[u]_{H^1}^{2\sigma}.
\end{align*}
This implies $I_k\in\scG$.
\end{proof}
We now start to calculate the $H^2$ energy for the approximate equation \eqref{eq:2.1}. We define the functionals $B_1$ and $B_2$ by
\begin{align}
\label{eq:2.6}
B_1(u)&=\int|\pt^2u|^2\pt(|u|^{2\sigma})=\sigma\int|\pt^2u|^2|u|^{2(\sigma-1)}\pt(|u|^2),
\\
\label{eq:2.7}
B_2(u)&=\sigma\Re\int(\pt^2\widebar{u})^2\pt uu|u|^{2(\sigma-1)}
=\sigma\Re\int(\pt^2u)^2{\pt\widebar{u}}\widebar{u}|u|^{2(\sigma-1)}.
\end{align}
\begin{lemma}
\label{lem:2.6}
There exists $G_0\in\scG$ such that
\begin{align}
\label{eq:2.8}
\frac{d}{dt}\norm[\pt^2 u_\eps]_{L^2}^2= {-}4B_1(v_\eps)-2B_2(v_\eps)+G_0(v_\eps)
\end{align}
for all $t\in\R$.
\end{lemma}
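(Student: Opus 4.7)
The plan is to differentiate $\norm[\pt^2 u_\eps]_{L^2}^2$ in time, substitute the equation \eqref{eq:2.5}, and isolate the two problematic summands $-4B_1(v_\eps)$ and $-2B_2(v_\eps)$ while absorbing everything else into a good term $G_0\in\scG$. I would start from $\frac{d}{dt}\norm[\pt^2u_\eps]_{L^2}^2=2\Re\int\pt^2(\pt_tu_\eps)\overline{\pt^2u_\eps}\,dx$, which is legitimate because $u_\eps\in C^1(\R,H^\infty)$ by Lemma \ref{lem:2.2}. The linear part $i\pt^2u_\eps$ contributes $2\Re(i\norm[\pt^3u_\eps]_{L^2}^2)=0$ after two integrations by parts. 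For the nonlinear contribution, self-adjointness of $J_\eps$ (Lemma \ref{lem:2.1}(ii)) together with $J_\eps\pt^2u_\eps=\pt^2v_\eps$ turn it into
\begin{align*}
-2\Re\int \pt^2\l(|v_\eps|^{2\sigma}\pt v_\eps\r)\overline{\pt^2v_\eps}\,dx.
\end{align*}

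Next, I would apply the Leibniz rule to $\pt^2(|v_\eps|^{2\sigma}\pt v_\eps)$ and treat the three resulting pieces separately. The middle piece $2\pt(|v_\eps|^{2\sigma})\pt^2v_\eps$, paired with $\overline{\pt^2v_\eps}$, yields $-4B_1(v_\eps)$ directly because $|\pt^2v_\eps|^2$ is real. The highest-derivative piece $|v_\eps|^{2\sigma}\pt^3v_\eps$ paired with $\overline{\pt^2v_\eps}$, after writing $2\Re(\pt^3v_\eps\overline{\pt^2v_\eps})=\pt(|\pt^2v_\eps|^2)$ and integrating by parts once, contributes $+B_1(v_\eps)$.

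The remaining piece $\pt^2(|v_\eps|^{2\sigma})\pt v_\eps\overline{\pt^2v_\eps}$ is where the real work lies. Using the explicit expansion
\begin{align*}
\pt^2(|v_\eps|^{2\sigma})=\sigma(\sigma-1)|v_\eps|^{2(\sigma-2)}(v_\eps\pt\widebar{v}_\eps+\widebar{v}_\eps\pt v_\eps)^2+\sigma|v_\eps|^{2(\sigma-1)}\l(2|\pt v_\eps|^2+v_\eps\pt^2\widebar{v}_\eps+\widebar{v}_\eps\pt^2v_\eps\r),
\end{align*}
I would isolate the two summands carrying a second derivative. The $\widebar{v}_\eps\pt^2v_\eps$ summand combines with $\pt v_\eps\overline{\pt^2v_\eps}$ to produce $-B_1(v_\eps)$ via the alternative form $B_1(v)=2\sigma\Re\int|v|^{2(\sigma-1)}\widebar{v}\pt v\,|\pt^2v|^2\,dx$. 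The $v_\eps\pt^2\widebar{v}_\eps$ summand, after using $\pt^2\widebar{v}_\eps=\overline{\pt^2v_\eps}$ and conjugating inside $\Re$, produces $-2B_2(v_\eps)$. Every remaining summand carries a single factor $\overline{\pt^2v_\eps}$ together with a degree-three polynomial in $\pt v_\eps,\pt\widebar{v}_\eps$ and factors $|v_\eps|^{2(\sigma-2)}v_\eps^k\widebar{v}_\eps^{2-k}$ with $k\in\{0,1,2\}$, so each of them matches, up to an explicit constant, one of $I_0,I_1,I_2$ from Lemma \ref{lem:2.5}, and is therefore good.

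Collecting contributions yields $-B_1-4B_1+B_1=-4B_1$ together with $-2B_2$, exactly as required, while $G_0$ is defined as the sum of all $I_k$-type terms. The main obstacle I anticipate is the bookkeeping in the expansion of $\pt^2(|v_\eps|^{2\sigma})$: since $\sigma>1$ need not be an integer, one must apply the noninteger-power chain rule $\pt(|v|^{2\sigma})=\sigma|v|^{2(\sigma-1)}(\widebar{v}\pt v+v\pt\widebar{v})$ symmetrically and, after expanding $(\widebar{v}\pt v+v\pt\widebar{v})^2$, verify that each of the summands without a second derivative of $v_\eps$ is captured by Lemma \ref{lem:2.5}. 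Smoothness of $v_\eps$ from Lemma \ref{lem:2.2} makes all pointwise manipulations legitimate, and the singular factor $|v_\eps|^{2(\sigma-2)}$ is always multiplied by enough powers of $v_\eps,\widebar{v}_\eps$ to leave the net nonnegative exponent $|v_\eps|^{2(\sigma-1)}$.
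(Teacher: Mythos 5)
Your proposal is correct and follows essentially the same route as the paper: differentiate, use the equation and the self-adjointness of $J_\eps$, apply Leibniz to $\pt^2(|v_\eps|^{2\sigma}\pt v_\eps)$, expand $\pt^2(|v_\eps|^{2\sigma})$ into its five constituents, absorb the first-derivative-only pieces via Lemma \ref{lem:2.5}, and track the $B_1$, $B_2$ contributions (your tally $-4B_1+B_1-B_1-2B_2$ agrees with the paper's grouping $-3B_1-B_1-2B_2$). The only cosmetic difference is that the paper justifies the chain-rule expansion of $\pt^2(|v_\eps|^{2\sigma})$ by regularizing with $(|v_\eps|^2+\delta)^\sigma$ and letting $\delta\downarrow0$, whereas you argue directly that the apparent singularity $|v_\eps|^{2(\sigma-2)}$ is harmless; both are fine.
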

\begin{proof}
A direct calculation shows that
\begin{align*}
\frac{d}{dt}\norm[\pt^2 u_\eps]_{L^2}^2
&=2\Re\rbra[\pt_t\pt^2u_\eps,\pt^2u_\eps]_{L^2}
\\
&=2\Re\int i\pt^4u_\eps\pt^2\widebar{u}_\eps-2\Re\int\pt^2(|J_\eps u_\eps|^{2\sigma}\pt J_\eps u_\eps)\pt^2\widebar{J_\eps u_\eps}
\\
&=-2\Re\int\pt^2(|v_\eps|^{2\sigma})\pt v_\eps\pt^2\widebar{v}_\eps
-4\int\pt(|v_\eps|^{2\sigma})|\pt^2v_\eps|^2-2\Re\int|v_\eps|^{2\sigma}\pt^3v_\eps\pt^2\widebar{v}_\eps.
\end{align*}
The term $\pt^2(|v_\eps|^{2\sigma})$ is represented by a linear combination of the five terms
\begin{align}
\label{eq:2.9}
\begin{aligned}
&\pt^2v_\eps\widebar{v}_\eps\abs[v_\eps]^{2(\sigma-1)},~\pt^2\widebar{v}_\eps v_\eps\abs[v_\eps]^{2(\sigma-1)},
\\
&(\pt v_\eps)^2\widebar{v}_\eps^{2}\abs[v_\eps]^{2(\sigma-2)},~|\pt v_\eps|^2\abs[v_\eps]^{2(\sigma-1)},~
(\pt\widebar{v}_\eps)^2v_\eps^2\abs[v_\eps]^{2(\sigma-2)}.
\end{aligned}
\end{align}
This relation may be justified by the calculation $\pt^2(|v_\eps|^2+\delta)^\sigma$ and passing to the limit $\delta\downarrow0$. The last three terms in \eqref{eq:2.9} correspond to $I_k(v_\eps)$ in Lemma \ref{lem:2.5} respectively, and they can be treated as good terms. 
Therefore, there exists $G_0\in\scG$ such that
\begin{align*}
\frac{d}{dt}\norm[\pt^2 u_\eps]_{L^2}^2
&= 
\begin{aligned}[t]
&{-}2\sigma\Re\int(\pt^2v_\eps\widebar{v}_\eps\abs[v_\eps]^{2(\sigma-1)}+\pt^2\widebar{v}_\eps v_\eps\abs[v_\eps]^{2(\sigma-1)})
\pt v_\eps\pt^2\widebar{v}_\eps
\\
&\quad 
{-}4\int\pt(|v_\eps|^{2\sigma})|\pt^2v_\eps|^2 
-\int|v_\eps|^{2\sigma}\pt|\pt^2v_\eps|^2+G_0(v_\eps)
\end{aligned}
\\
&={-}\sigma\int|\pt^2v_\eps|^2|v_\eps|^{2(\sigma-1)}\pt(|v_\eps|^2)
-2B_2(v_\eps)-3B_1(v_\eps)+G_0(v_\eps)
\\
&=-4B_1(v_\eps)-2B_2(v_\eps)+G_0(v_\eps),
\end{align*}
which proves \eqref{eq:2.8}.
\end{proof}

Next, we calculate the time derivative of the correction terms on the LHS of \eqref{eq:2.4}. The first correction term is calculated as follows.
\begin{lemma}
\label{lem:2.7}
There exists $G_1\in\scG$ such that
\begin{align}
\label{eq:2.10}
\frac{d}{dt}\Im\int \pt^2\widebar{v}_\eps\pt v_\eps|v_\eps|^{2\sigma}=-2B_1(v_\eps)-2B_2(v_\eps)+G_1(v_\eps)
\end{align}
for all $t\in\R$.
\end{lemma}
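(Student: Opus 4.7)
The plan is to differentiate the functional in time, substitute the evolution identity $\pt_t v_\eps = i\pt^2 v_\eps - N(v_\eps)$ with $N(v_\eps)\ce J_\eps(|v_\eps|^{2\sigma}\pt v_\eps)$ (valid because $\pt J_\eps=J_\eps\pt$ and $J_\eps^2=J_\eps$), and then track how the target combination $-2B_1-2B_2$ arises. The product rule produces three contributions $\mathrm{(I)},\mathrm{(II)},\mathrm{(III)}$, coming from $\pt_t$ hitting $\pt^2\widebar{v}_\eps$, $\pt v_\eps$, and $|v_\eps|^{2\sigma}$ respectively; each further splits into a Schrödinger (linear) piece and a nonlinear piece involving $N(v_\eps)$. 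For the nonlinear pieces my strategy is to redistribute derivatives via integration by parts together with the self-adjointness $(J_\eps f,g)_{L^2}=(f,J_\eps g)_{L^2}$, so that no resulting integrand carries more than $\|\pt^2 v_\eps\|_{L^2}$ in its worst factor; applying $H^1\hookrightarrow L^\infty$ together with Gagliardo--Nirenberg interpolation (in the spirit of Lemma \ref{lem:2.5}) then yields bounds of the form $C(\|v_\eps\|_{H^1})(1+\|v_\eps\|_{H^2}^2)$, placing all nonlinear contributions in $\scG$.

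For the Schrödinger part of $\mathrm{(I)}$, namely $-\Re\int \pt^4\widebar{v}_\eps\pt v_\eps|v_\eps|^{2\sigma}$, I integrate by parts twice and use $\Re(\pt^3\widebar{v}_\eps\pt^2 v_\eps)=\tfrac12\pt|\pt^2 v_\eps|^2$ to obtain $-\tfrac32 B_1(v_\eps)-\Re\int \pt^2\widebar{v}_\eps\pt v_\eps\pt^2(|v_\eps|^{2\sigma})$. The same identity applied to the Schrödinger part of $\mathrm{(II)}$, which is $\Re\int\pt^2\widebar{v}_\eps\pt^3 v_\eps|v_\eps|^{2\sigma}$, gives $-\tfrac12 B_1(v_\eps)$. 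For the Schrödinger part of $\mathrm{(III)}$, one computes $\pt_t(|v_\eps|^{2\sigma})=-2\sigma|v_\eps|^{2(\sigma-1)}\Im(\pt^2 v_\eps\widebar{v}_\eps)$ modulo a nonlinear summand that is already absorbed into $\scG$, so this contribution equals
\[
-2\sigma\int |v_\eps|^{2(\sigma-1)}\Im(\pt^2 v_\eps\widebar{v}_\eps)\Im(\pt^2\widebar{v}_\eps\pt v_\eps),
\]
and expanding the two imaginary factors via $\Im z=(z-\bar z)/(2i)$ and collecting using $\pt(|v_\eps|^2)=2\Re(\pt v_\eps\widebar{v}_\eps)$ collapses this expression to exactly $\tfrac12 B_1(v_\eps)-B_2(v_\eps)$.

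The residual $-\Re\int \pt^2\widebar{v}_\eps\pt v_\eps\pt^2(|v_\eps|^{2\sigma})$ left over from $\mathrm{(I)}$ is treated by substituting the decomposition \eqref{eq:2.9}: the two leading components $\sigma\pt^2 v_\eps\widebar{v}_\eps|v_\eps|^{2(\sigma-1)}$ and $\sigma\pt^2\widebar{v}_\eps v_\eps|v_\eps|^{2(\sigma-1)}$ contribute $-\tfrac12 B_1(v_\eps)$ and $-B_2(v_\eps)$ respectively (the first via $\Re(\pt v_\eps\widebar{v}_\eps)=\tfrac12\pt|v_\eps|^2$, the second by direct comparison with \eqref{eq:2.7}), while the remaining three components are of $I_k$-type from Lemma \ref{lem:2.5} and hence good. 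Summing the $B_1$ coefficients $-\tfrac32-\tfrac12-\tfrac12+\tfrac12=-2$ and the $B_2$ coefficients $-1-1=-2$ yields the asserted identity. The main obstacle is the delicate coefficient bookkeeping, as many terms of similar structure must combine with exactly the right numerical factors; a secondary technical point, handled exactly as after \eqref{eq:2.9} in Lemma \ref{lem:2.6}, is the potential singularity of $|v_\eps|^{2(\sigma-2)}$ on the zero set of $v_\eps$ when $\sigma\in(1,2)$, which is dealt with by first working with the regularization $(|v_\eps|^2+\delta)^\sigma$ and passing to the limit $\delta\downarrow0$.
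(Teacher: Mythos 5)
Your proposal is correct and follows essentially the same route as the paper: the same product-rule split, replacement of $\pt_t v_\eps$ by $i\pt^2 v_\eps$ modulo good nonlinear contributions, integration by parts with the decomposition \eqref{eq:2.9}, and the same coefficient bookkeeping ($-\tfrac32-\tfrac12-\tfrac12+\tfrac12$ for $B_1$ and $-1-1$ for $B_2$). The only cosmetic difference is that the paper observes the nonlinear replacement in the first contribution vanishes identically by the self-adjointness of $J_\eps$, whereas you absorb it into $\scG$ by an estimate, which is equally valid.
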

\begin{proof}
A direct calculation shows that the LHS of \eqref{eq:2.10} equals
\begin{align}
\label{eq:2.11}
\begin{aligned}
&\Im\int\pt_t\pt^2\widebar{v}_\eps\pt v_\eps|v_\eps|^{2\sigma}
+\Im\int\pt^2\widebar{v}_\eps\pt_t\pt v_\eps|v_\eps|^{2\sigma}
\\
&\quad
+\sigma\Im\int\pt^2\widebar{v}_\eps\pt v_\eps
\abs[v_\eps]^{2(\sigma-1)}\l(\pt_tv_\eps 
\widebar{v}_\eps+v_\eps\pt_t\widebar{v}_\eps \r).
\end{aligned}
\end{align}
We now rewrite the time derivative in \eqref{eq:2.11} by using the equation \eqref{eq:2.5}. 

We note that the replacement $\pt_t v_\eps\rightarrow$ the nonlinearity can be treated as good terms as follows. 
For the first term of \eqref{eq:2.11}, this replacement gives
\begin{align*}
{-}\Im\int\pt^2J_\eps(|v_\eps|^{2\sigma}\pt\widebar{v}_\eps)J_\eps(|v_\eps|^{2\sigma}\pt v_\eps)
=\Im\int J_\eps\l[\pt(|v_\eps|^{2\sigma}\pt\widebar{v}_\eps)\r] J_\eps\l[\pt(|v_\eps|^{2\sigma}\pt v_\eps)\r]=0.
\end{align*}
The same replacement for the second term of \eqref{eq:2.11} is estimated as
\begin{align}
\label{eq:2.12}
\abs[ {\Im\int
\pt^2\widebar{v}_\eps J_\eps^2\l[\pt(|v_\eps|^{2\sigma}\pt\widebar{v}_\eps)\r] |v_\eps|^{2\sigma} } ]
\cleq 
\norm[\pt^2 v_\eps]_{L^2}^2\norm[v_\eps]_{L^\infty}^{4\sigma}
+\norm[\pt^2 v_\eps]_{L^2}\norm[\pt v_\eps]_{L^4}^2
\norm[v_\eps]_{L^\infty}^{4\sigma-1},
\end{align}
which implies that this replacement gives a good term. The third term of \eqref{eq:2.11} can be treated similarly by the same replacement.

Therefore, there exists $G_{11}\in\scG$ such that \eqref{eq:2.11} equals
\begin{align*}
&
\begin{aligned}[t]
&{-}\Re\int\pt^4\widebar{v}_\eps\pt v_\eps|v_\eps|^{2\sigma}+\frac{1}{2}\int\pt(|\pt^2v_\eps|)|v_\eps|^{2\sigma}
\\
&\quad
+\frac{1}{2}\int|\pt^2v_\eps|^2{\sigma|v_\eps|^{2(\sigma-1)}\pt(|v_\eps|^2)}
-\sigma\Re\int(\pt^2\widebar{v}_\eps)^2\pt v_\eps v_\eps|v_\eps|^{2(\sigma-1)}+G_{11}(v_\eps)
\end{aligned}
\\
&={-}\Re\int\pt^2\widebar{v}_\eps\pt^2(\pt v_\eps|v_\eps|^{2\sigma})
-B_2(v_\eps)+ G_{11}(v_\eps).
\end{align*}
The first term on the RHS of the last equality equals
\begin{align*}
&{-}\frac{1}{2}\int\pt(|\pt^2v_\eps|^2)|v_\eps|^{2\sigma}-2\int|\pt^2v_\eps|^2\pt(|v_\eps|^{2\sigma})
-\Re\int\pt^2\widebar{v}_\eps\pt v_\eps\pt^2(|v_\eps|^{2\sigma}).
\end{align*}
By recalling the calculation of $\pt^2(|v_\eps|^{2\sigma})$ in the proof of Lemma \ref{lem:2.6}, we deduce that there exists $G_{12}\in\scG$ such that the previous formula equals
\begin{align*}
&{-}\frac{3}{2}B_1(v_\eps)-\sigma\Re\int\pt^2\widebar{v}_\eps\pt v_\eps|v_\eps|^{2(\sigma-1)}(\pt^2 v_\eps\widebar{v}_\eps+v_\eps\pt^2\widebar{v}_\eps)+G_{12}(v_\eps)
\\
={}&{-}\frac{3}{2}B_1(v_\eps)-\frac{1}{2}\sigma\int|\pt^2v_\eps|^2|v_\eps|^{2(\sigma-1)}\pt(|v_\eps|^2)
-\sigma\Re\int(\pt^2\widebar{v}_\eps)^2\pt v_\eps v_\eps|v_\eps|^{2(\sigma-1)}+G_{12}(v_\eps)
\\
={}&{-}2B_1(v_\eps)-B_2(v_\eps)+G_{12}(v_\eps).
\end{align*}
Hence we conclude \eqref{eq:2.10} by setting $G_1=G_{11}+G_{12}$.
\end{proof}
The calculation of the second correction term on the LHS of \eqref{eq:2.4} is a little more complicated. We define the functional $B_3$ by
\begin{align}
\label{eq:2.13}
B_3(u)&= \sigma(\sigma-1)\Re\int(\pt^2u)^2\pt u \widebar{u}^{3}|u|^{2(\sigma-2)}.
\end{align}
\begin{lemma}
\label{lem:2.8}
There exists $G_2\in\scG$ such that
\begin{align}
\label{eq:2.14}
\frac{d}{dt}\,\sigma\Im\int\pt^2v_\eps\pt v_\eps\widebar{v}_\eps^{2}|v_\eps|^{2(\sigma-1)}
= (\sigma+1)B_2(v_\eps) +3B_3(v_\eps)+G_2(v_\eps)
\end{align}
for all $t\in\R$.
\end{lemma}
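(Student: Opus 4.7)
The proof follows the strategy of Lemma~\ref{lem:2.7} adapted to the four-factor integrand $\pt^2 v_\eps\,\pt v_\eps\,\widebar v_\eps^{\,2}\,|v_\eps|^{2(\sigma-1)}$. First I apply the product rule to obtain four contributions in which $\pt_t$ falls, in turn, on $\pt^2 v_\eps$, on $\pt v_\eps$, on $\widebar v_\eps^{\,2}$, and on $|v_\eps|^{2(\sigma-1)}$. In each term I substitute $\pt_t v_\eps = i\pt^2 v_\eps - J_\eps(|v_\eps|^{2\sigma}\pt v_\eps)$ (and its complex conjugate) from \eqref{eq:2.5}. The non-integer power $|v_\eps|^{2(\sigma-1)}$ is differentiated via the regularization $(|v_\eps|^2+\delta)^{\sigma-1}$ and the passage $\delta\downarrow 0$, exactly as in Lemma~\ref{lem:2.6}.

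The nonlinear part of the substitution (the $-J_\eps(|v_\eps|^{2\sigma}\pt v_\eps)$ contributions) produces functionals in $\scG$ by the same mechanism as in \eqref{eq:2.12}: one uses the self-adjointness of $J_\eps$ to transfer it onto the neighbouring factor, distributes any spatial derivative by Leibniz, and estimates via H\"older together with the Gagliardo--Nirenberg inequality $\norm[\pt v_\eps]_{L^4}^2\cleq\norm[v_\eps]_{H^1}\norm[v_\eps]_{H^2}$ and $H^1\subset L^\infty$. For the linear part $i\pt^2 v_\eps$, the two contributions in which $\pt_t$ hits $\pt^2 v_\eps$ or $\pt v_\eps$ carry $\pt^4 v_\eps$ and $\pt^3 v_\eps$ respectively; I integrate by parts twice in the former and once in the latter to push all derivatives down to order two. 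Expanding $\pt^2(\pt v_\eps\,\widebar v_\eps^{\,2}|v_\eps|^{2(\sigma-1)})$ and $\pt(\widebar v_\eps^{\,2}|v_\eps|^{2(\sigma-1)})$ by Leibniz yields many branches; those of the form $\pt^2 v_\eps\,(\pt v_\eps)^2\,(\cdots)$ lie in $\scG$ by Lemma~\ref{lem:2.5}, while the top-order branches produce the structures $(\pt^2 v_\eps)^2\,\pt\widebar v_\eps\,\widebar v_\eps\,|v_\eps|^{2(\sigma-1)}$ and $(\pt^2 v_\eps)^2\,\pt v_\eps\,\widebar v_\eps^{\,3}\,|v_\eps|^{2(\sigma-2)}$ (the latter after using $\pt(|v|^2) = \pt v\,\widebar v + v\,\pt\widebar v$), matching exactly $B_2(v_\eps)$ and $B_3(v_\eps)$. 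The third and fourth product-rule terms carry only $\pt^2 v_\eps$ after the linear substitution and contribute analogously via a direct Leibniz expansion.

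The main obstacle is the combinatorial bookkeeping: among the many Leibniz branches one must identify precisely which contribute to $B_2$, which to $B_3$, and which are good, and then sum the coefficients to recover the prefactors $\sigma+1$ and $3$. A further subtlety is that $|\pt^2 v_\eps|^2$ structures of $B_1$-type (which are \emph{not} in $\scG$) appear along the way from the pairings $\pt^2 v_\eps \cdot \pt^2\widebar v_\eps$ and must cancel out exactly in the final sum, leaving only the $(\pt^2 v_\eps)^2$ structures of $B_2$ and $B_3$. A useful consistency check is the homogeneity: the LHS integrand has power $2\sigma+2$ in $v_\eps$ and three spatial derivatives, while $B_2$ and $B_3$ share power $2\sigma+2$ and carry five derivatives, matching the two extra derivatives produced by the substitution $\pt_t v_\eps = i\pt^2 v_\eps$. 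No analytic difficulty beyond those already encountered in Lemmas~\ref{lem:2.6} and~\ref{lem:2.7} arises.
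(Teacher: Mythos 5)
Your proposal follows the paper's proof essentially verbatim in structure: the four-fold product rule, substitution of \eqref{eq:2.5} with the nonlinear contributions absorbed into $\scG$ as in \eqref{eq:2.12}, integration by parts to reduce the $\pt^4 v_\eps$ and $\pt^3 v_\eps$ terms to $(\pt^2 v_\eps)^2$ structures, Lemma \ref{lem:2.5} for the $\pt^2 v_\eps(\pt v_\eps)^3$-type branches, and in particular the exact cancellation of the $B_1$-type contributions (which in the paper appear as $-\tfrac{\sigma+1}{2}B_1$ and $+\tfrac{\sigma+1}{2}B_1$), which you correctly anticipate. The only part left unexecuted is the coefficient arithmetic yielding $\sigma+1$ and $3$, which you rightly identify as routine bookkeeping.
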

\begin{proof}
A direct calculation shows that the LHS of \eqref{eq:2.14} equals
\begin{align*}
\begin{aligned}[t]
&\sigma\Im\int\pt_t\pt^2v_\eps\pt v_\eps\widebar{v}_\eps^{2}|v_\eps|^{2(\sigma-1)}
+\sigma\Im\int\pt^2v_\eps\pt_t\pt v_\eps\widebar{v}_\eps^{2}|v_\eps|^{2(\sigma-1)}
\\
&\quad
+\sigma(\sigma-1)\Im\int\pt^2v_\eps\pt v_\eps\pt_tv_\eps\widebar{v}_\eps^3|v_\eps|^{2(\sigma-2)}
+\sigma(\sigma+1)\Im\int\pt^2v_\eps\pt v_\eps 
\widebar{v}_\eps\pt_t\widebar{v}_\eps|v_\eps|^{2(\sigma-1)}.
\end{aligned}
\end{align*}
Similar to the proof of Lemma \ref{lem:2.7}, the replacement $\pt_tv_\eps\rightarrow$ the nonlinearity yields good terms. 
Therefore, there exists $G_{21}\in\scG$ such that the previous formula equals
\begin{align*}
&
\begin{aligned}[t]
&\sigma\Re\int\pt^4v_\eps\pt v_\eps\widebar{v}_\eps^{2}|v_\eps|^{2(\sigma-1)}
+\sigma\Re\int\pt^2v_\eps\pt^3 v_\eps\widebar{v}_\eps^{2}|v_\eps|^{2(\sigma-1)}
\\
&\quad+\sigma(\sigma-1)\Re\int(\pt^2v_\eps)^2\pt v_\eps\widebar{v}_\eps^3|v_\eps|^{2(\sigma-2)}
-\sigma(\sigma+1)\Re\int\abs[\pt^2v_\eps]^2\pt v_\eps 
\widebar{v}_\eps|v_\eps|^{2(\sigma-1)}
+G_{21}(v_\eps)
\end{aligned}
\\
={}&
\begin{aligned}[t]
&\sigma\Re\int\pt^4v_\eps\pt v_\eps\widebar{v}_\eps^{2}|v_\eps|^{2(\sigma-1)}
+\frac{\sigma}{2}\Re\int\pt\l( (\pt^2v_\eps)^2 \r)\widebar{v}_\eps^{2}|v_\eps|^{2(\sigma-1)}
\\
&\quad +B_3(v_\eps)-\frac{\sigma+1}{2}B_1(v_\eps)+G_{21}(v_\eps).
\end{aligned}
\end{align*}
We now calculate the first two terms on the RHS of the last equality. By integration by parts, the first term equals
\begin{align*}
&
\begin{aligned}[t]
&{-}\sigma\Re\int\pt^3v_\eps\pt^2 v_\eps\widebar{v}_\eps^{2}|v_\eps|^{2(\sigma-1)}
-\sigma(\sigma+1)\Re\int\pt^3v_\eps|\pt v_\eps|^2 \widebar{v}_\eps|v_\eps|^{2(\sigma-1)}
\\
&\quad
-\sigma(\sigma-1)\Re\int
\pt^3v_\eps(\pt v_\eps)^2\widebar{v}_\eps^{3}|v_\eps|^{2(\sigma-2)}
\end{aligned}
\\
={}&
\begin{aligned}[t]
&\frac{\sigma}{2}\Re\int(\pt^2 v_\eps)^2\pt\l(\widebar{v}_\eps^{2}|v_\eps|^{2(\sigma-1)}\r)
+\sigma(\sigma+1)\Re\int(\pt^2v_\eps)^2\pt\widebar{v}_\eps \widebar{v}_\eps|v_\eps|^{2(\sigma-1)}
\\
&{}
+\sigma(\sigma+1)\Re\int|\pt^2v_\eps|^2\pt v_\eps \widebar{v}_\eps|v_\eps|^{2(\sigma-1)}
+\sigma(\sigma+1)\Re\int \pt^2v_\eps|\pt v_\eps|^2\pt\l(\widebar{v}_\eps|v_\eps|^{2(\sigma-1)}\r)
\\
&{}+2\sigma(\sigma-1)\Re\int
(\pt^2v_\eps)^2\pt v_\eps\widebar{v}_\eps^{3}|v_\eps|^{2(\sigma-2)}
+\sigma(\sigma-1)\Re\int
\pt^2v_\eps(\pt v_\eps)^2\pt\l(\widebar{v}_\eps^{3}|v_\eps|^{2(\sigma-2)}\r).
\end{aligned}
\end{align*}
By Lemma \ref{lem:2.5}, one can see that the fourth term and the sixth term on the RHS of the last equality are good terms. Thus, there exists $G_{22}\in\scG$ such that the previous formula equals
\begin{align*}
&
\begin{aligned}[t]
&\frac{\sigma(\sigma-1)}{2}
\Re\int(\pt^2 v_\eps)^2\pt v_\eps\widebar{v}_\eps^{3}
|v_\eps|^{2(\sigma-2)}
+\frac{\sigma(\sigma+1)}{2}
\Re\int(\pt^2 v_\eps)^2\pt \widebar{v}_\eps \widebar{v}_\eps
|v_\eps|^{2(\sigma-1)}
\\
&{}~+(\sigma+1)B_2(v_\eps)+\frac{\sigma+1}{2}B_1(v_\eps)
+2B_3(v_\eps)+G_{22}(v_\eps)
\end{aligned}
\\
={}&\frac{\sigma+1}{2}B_1(v_\eps)+\frac{3(\sigma+1)}{2}B_2(v_\eps)+\frac{5}{2}B_3(v_\eps)+G_{22}(v_\eps).
\end{align*}
Similarly, from integration by parts we obtain
\begin{align*}
&\frac{\sigma}{2}\Re\int\pt\l( (\pt^2v_\eps)^2 \r)\widebar{v}_\eps^{2}|v_\eps|^{2(\sigma-1)}={-}\frac{\sigma}{2}\Re\int(\pt^2v_\eps)^2\pt\l(\widebar{v}_\eps^{2}|v_\eps|^{2(\sigma-1)}\r)
\\
={}&-\frac{\sigma(\sigma-1)}{2}\Re\int(\pt^2v_\eps)^2\pt v_\eps \widebar{v}_\eps^{3}|v_\eps|^{2(\sigma-2)}
-\frac{\sigma(\sigma+1)}{2}
\Re\int(\pt^2v_\eps)^2\pt\widebar{v}_\eps\widebar{v}_\eps|v_\eps|^{2(\sigma-1)}
\\
={}&-\frac{1}{2}B_3(v_\eps) -\frac{\sigma+1}{2}B_2(v_\eps).
\end{align*}
Collecting these calculations, we obtain \eqref{eq:2.14} by setting $G_2=G_{21}+G_{22}$.
\end{proof}

Finally, the third correction term on the LHS of \eqref{eq:2.4} is calculated as follows.
\begin{lemma}
\label{lem:2.9}
There exists $G_3\in\scG$ such that
\begin{align}
\label{eq:2.15}
\frac{d}{dt}\sigma(\sigma-1)
\Im\int(\pt v_\eps)^3\widebar{v}_\eps^3|v_\eps|^{2(\sigma-2)}
= -6B_3(v_\eps)+G_3(v_\eps).
\end{align}
for all $t\in\R$.
\end{lemma}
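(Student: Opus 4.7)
The plan is to mimic the proofs of Lemmas \ref{lem:2.7} and \ref{lem:2.8}, with the key simplification that the functional
\begin{align*}
F(v_\eps) \ce \sigma(\sigma-1)\Im\int(\pt v_\eps)^3\widebar{v}_\eps^{3}|v_\eps|^{2(\sigma-2)}
\end{align*}
contains no $\pt^2 v_\eps$ factor. Consequently only one of the Leibniz terms coming from $\frac{d}{dt}F(v_\eps)$ will require an integration by parts to produce $B_3$, and all other contributions will be good by Lemma \ref{lem:2.5}.

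First, I apply the product rule to get three groups of terms, corresponding to differentiating $(\pt v_\eps)^3$, $\widebar{v}_\eps^3$, and $|v_\eps|^{2(\sigma-2)}$ (the last via $\pt_t|v_\eps|^2 = 2\Re(\pt_t v_\eps\, \widebar{v}_\eps)$, which brings out a factor $\sigma-2$ and an extra $\widebar{v}_\eps$). Second, I substitute the equation: since $v_\eps=J_\eps u_\eps$ and $J_\eps^2=J_\eps$, applying $J_\eps$ to \eqref{eq:2.5} yields
\begin{align*}
\pt_t v_\eps = i\pt^2 v_\eps - J_\eps\l(|v_\eps|^{2\sigma}\pt v_\eps\r),
\end{align*}
and analogously for $\pt_t\widebar{v}_\eps$ and $\pt_t\pt v_\eps=\pt(\pt_t v_\eps)$.

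Each replacement of $\pt_t v_\eps$ by its nonlinear part yields an integrand containing at most one $\pt^2 v_\eps$ and at most three first-order derivatives of $v_\eps$, multiplied by factors whose total modulus is $|v_\eps|^{2(\sigma-1)}$; hence, by H\"older and Gagliardo--Nirenberg exactly as in Lemma \ref{lem:2.5} (cf.\ the bound \eqref{eq:2.12}), each such piece is a good term. For the linear replacement $\pt_t v_\eps \to i\pt^2 v_\eps$, the contributions from the second and third Leibniz groups have the schematic form $\int \pt^2 v_\eps \cdot (\pt v_\eps)^3 \cdot v_\eps^{\ast}$, with the $v_\eps$-factors of total modulus $|v_\eps|^{2(\sigma-1)}$, and so lie in $\scG$ directly by Lemma \ref{lem:2.5}. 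The only nontrivial contribution is the linear part of the first Leibniz group,
\begin{align*}
3\sigma(\sigma-1)\Re\int(\pt v_\eps)^2 \pt^3 v_\eps \, \widebar{v}_\eps^3 |v_\eps|^{2(\sigma-2)},
\end{align*}
which, after moving one derivative off $\pt^3 v_\eps$ by integration by parts, splits as
\begin{align*}
-6\sigma(\sigma-1)\Re\int (\pt^2 v_\eps)^2 \pt v_\eps\, \widebar{v}_\eps^3 |v_\eps|^{2(\sigma-2)} - 3\sigma(\sigma-1)\Re\int \pt^2 v_\eps (\pt v_\eps)^2 \pt\l(\widebar{v}_\eps^3 |v_\eps|^{2(\sigma-2)}\r).
\end{align*}
The first integral is exactly $-6B_3(v_\eps)$ by \eqref{eq:2.13}; the second, after expanding the remaining derivative, is a linear combination of integrals of the schematic form $\int \pt^2 v_\eps (\pt v_\eps)^3 \cdot v_\eps^{\ast}$, hence good. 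Collecting all good contributions into $G_3\in\scG$ proves \eqref{eq:2.15}.

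The main obstacle is bookkeeping rather than conceptual: tracking the coefficients through the product rule and the several integration-by-parts steps, and justifying the formal manipulations of the singular factors $|v_\eps|^{2(\sigma-2)}$ and $|v_\eps|^{2(\sigma-3)}$ (which always combine with matching powers of $v_\eps$ or $\widebar{v}_\eps$ to leave a nonnegative total modulus degree) via the $(|v_\eps|^2+\delta)^{\sigma-2}$ regularization used in the proof of Lemma \ref{lem:2.6}.
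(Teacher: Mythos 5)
Your proposal is correct and follows essentially the same route as the paper's proof: split the time derivative by the Leibniz rule, observe that all nonlinear replacements of $\pt_t v_\eps$ and the linear replacements in the groups without $\pt_t\pt v_\eps$ are good terms (the latter having the $\pt^2 v_\eps(\pt v_\eps)^3$ structure of Lemma \ref{lem:2.5}, with the singular factor $|v_\eps|^{2(\sigma-3)}$ harmless for $\sigma>1$), and then integrate by parts in the single remaining term $3\sigma(\sigma-1)\Re\int(\pt v_\eps)^2\pt^3 v_\eps\,\widebar{v}_\eps^3|v_\eps|^{2(\sigma-2)}$ to produce $-6B_3(v_\eps)$ plus a good remainder. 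The coefficients and the identification of the sole nontrivial contribution match the paper exactly.
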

\begin{proof}
The LHS of \eqref{eq:2.15} can be computed as follows
\begin{align}
\label{eq:2.16}
3\sigma(\sigma-1) \Im\int(\pt v_\eps)^2\pt_t\pt v_\eps
\widebar{v}_\eps^3|v_\eps|^{2(\sigma-2)}
+\sigma(\sigma-1)\Im\int(\pt v_\eps)^3\pt_t(\widebar{v}_\eps^3|v_\eps|^{2(\sigma-2)}).
\end{align}
Regarding the second term in \eqref{eq:2.16}, we note that
\begin{align*}
\pt_t(\widebar{v}_\eps^3|v_\eps|^{2(\sigma-2)})
=(\sigma-2)\pt_tv_\eps\widebar{v}_\eps^4|v_\eps|^{2(\sigma-3)}+(\sigma+1)\pt_t\widebar{v}_\eps\widebar{v}_\eps^2|v_\eps|^{2(\sigma-2)},
\end{align*}
which makes sense when $\sigma>1$. Therefore, when we rewrite the time derivative by the equation \eqref{eq:2.5}, the second term in \eqref{eq:2.16} is expressed as $G_{31}(v_\eps)$ for some $G_{31}\in\scG$.

Regarding the first term in \eqref{eq:2.16}, similarly to the proof of Lemma \ref{lem:2.7}, the replacement $\pt_tv_\eps\rightarrow$ the nonlinearity is expressed as $G_{32}(v_\eps)$ for some $G_{32}\in\scG$. Thus, by integration by parts the first term in \eqref{eq:2.16} equals
\begin{align*}
&3\sigma(\sigma-1) \Re\int(\pt v_\eps)^2\pt^3v_\eps \widebar{v}_\eps^3|v_\eps|^{2(\sigma-2)}+G_{32}(v_\eps)
\\
={}&{-}6\sigma(\sigma-1)\int (\pt^2v_\eps)^2\pt v_\eps\widebar{v}_\eps^3|v_\eps|^{2(\sigma-2)}+G_{32}(v_\eps)+G_{33}(v_\eps)
\\
={}&{-}6B_3(v_3)+G_{32}(v_\eps)+G_{33}(v_\eps),
\end{align*}
where we have set
\begin{align*}
G_{33}(v_\eps)={-}3\sigma(\sigma-1) \Re\int(\pt v_\eps)^2\pt^2v_\eps 
\pt\l(\widebar{v}_\eps^3|v_\eps|^{2(\sigma-2)}\r),
\end{align*}
which is a good term. Hence \eqref{eq:2.16} follows by setting $G_3=G_{31}+G_{32}+G_{33}$.
\end{proof}
\begin{proof}[Proof of Theorem \ref{thm:2.4}]
The conclusion follows from Lemmas \ref{lem:2.6}, \ref{lem:2.7}, \ref{lem:2.8}, and \ref{lem:2.9}. Indeed each coefficient of the energy at the LHS in \eqref{eq:2.4} is set to cancel out $B_1(v_\eps)$, $B_2(v_\eps)$, and $B_3(v_\eps)$ (see also the discussion in Section \ref{sec:4}). 
\end{proof}

\section{Global existence of $H^2$ solutions}
\label{sec:3}
In this section we prove Theorem \ref{thm:1.1} based on the $H^2$ identity \eqref{eq:2.4} for approximate solutions. For simplicity we only consider the positive time direction.

\subsection{Convergence of approximate solutions}
We recall the approximate equation introduced in Section \ref{sec:2}:
\begin{equation}\label{eq:3.1}
i\pt_t u_\eps+\pt_x^2 u_\eps+iJ_\eps\l(|J_\eps u_\eps|^{2\sigma}\pt_xJ_\eps u_\eps\r)=0,
\quad (t,x)\in\R\times\T.
\end{equation}
We have the following claim about the convergence of approximate solutions.
\begin{lemma}
\label{lem:3.1}
Let $\varphi\in H^2$ and let $u_\eps$ be the smooth solution of \eqref{eq:3.1} with $u_\varepsilon(0)=J_\varepsilon \varphi$. Assume that for a given $T>0$,
\begin{align}
\label{eq:3.2}
\sup_{\eps\in(0,1)}\sup_{t\in[0,T]}\norm[u_\eps(t)]_{H^2}<\infty.
\end{align}
Then, there exists $u\in C([0,T], H^2)$ such that
\begin{align}
\label{eq:3.3}
u_\eps(t)&\wto u(t)\quad\text{in}~H^2
\quad\text{for all}~t\in[0,T],
\\
\label{eq:3.4}
u_\eps&\to u\quad\text{in}~C([0,T], H^s)~\text{with $s<2$},
\end{align}
and $u$ gives a unique $H^2$ solution to \eqref{eq:1.1}.
\end{lemma}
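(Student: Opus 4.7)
The approach is a standard compactness argument: the uniform $H^2$ bound \eqref{eq:3.2}, combined with a uniform Lipschitz-in-time bound in $L^2$, yields a subsequential limit $u$ via Aubin--Lions, and Ambrose--Simpson's $H^2$ wellposedness (recalled in the introduction) then identifies this limit as the unique $H^2$ solution and promotes subsequential to genuine convergence.

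First I would establish $\sup_{\eps\in(0,1)}\sup_{t\in[0,T]}\norm[\pt_t u_\eps(t)]_{L^2}<\infty$. Reading $\pt_t u_\eps$ directly from \eqref{eq:3.1}, the linear part is controlled in $L^2$ by $\norm[u_\eps]_{H^2}$, while Lemma \ref{lem:2.1}(iii) together with $H^1\hookrightarrow L^\infty$ gives
\begin{align*}
\norm[J_\eps(|J_\eps u_\eps|^{2\sigma}\pt J_\eps u_\eps)]_{L^2}
\cleq \norm[J_\eps u_\eps]_{L^\infty}^{2\sigma}\norm[\pt J_\eps u_\eps]_{L^2}
\cleq \norm[u_\eps]_{H^1}^{2\sigma+1},
\end{align*}
and both are uniformly bounded by \eqref{eq:3.2}. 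Combined with \eqref{eq:3.2}, this provides the ingredients for Aubin--Lions, yielding relative compactness of $\{u_\eps\}$ in $C([0,T], H^s)$ for every $s<2$. Along a subsequence, $u_\eps\to u$ in $C([0,T], H^s)$ with $u\in L^\infty([0,T], H^2)$, and the pointwise weak convergence $u_\eps(t)\wto u(t)$ in $H^2$ follows from the $L^2$-strong convergence combined with the $H^2$ bound.

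Next I would pass to the limit in \eqref{eq:3.1}. Fixing $s_0\in(3/2,2)$, strong convergence in $C([0,T], H^{s_0})$ together with $H^{s_0-1}\hookrightarrow L^\infty$ yields $J_\eps u_\eps\to u$ and $\pt J_\eps u_\eps\to \pt u$ in $C([0,T], L^\infty)$, so that
\begin{align*}
|J_\eps u_\eps|^{2\sigma}\pt J_\eps u_\eps\to |u|^{2\sigma}\pt u\quad\text{in}~C([0,T], L^2).
\end{align*}
Applying the outer $J_\eps$ and invoking Lemma \ref{lem:2.1}(iii),(v) transfers this convergence through $J_\eps$ in $C([0,T], L^2)$. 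Since $J_\eps\varphi\to\varphi$ in $H^2$, the initial condition also passes to the limit, so $u$ is a distributional solution of \eqref{eq:1.1} with $u(0)=\varphi$.

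Finally, to upgrade $u$ to a strongly $H^2$-continuous solution and to promote subsequential convergence to convergence of the whole family, I would invoke Ambrose--Simpson's $H^2$ local wellposedness: the unique maximal $H^2$ solution $\til u\in C([0,T_{\max}), H^2)$ issued from $\varphi$ must, by $H^2$ uniqueness, coincide with $u$ on every subinterval where both are defined. Using \eqref{eq:3.2} to bound the local existence time from below and iterating over finitely many local intervals covers $[0,T]$, giving $u=\til u\in C([0,T], H^2)$; uniqueness of the limit then forces the whole family, not merely a subsequence, to converge. The principal obstacle is precisely this final step: converting the weak $L^\infty_t H^2$ limit obtained from compactness into a strongly $H^2$-continuous solution, which is handled here by appealing to the preexisting $H^2$ theory rather than by a direct energy argument at the limit level.
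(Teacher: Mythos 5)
Your argument is correct in substance, but it follows a genuinely different route from the paper, in two places. First, where you extract the limit by Aubin--Lions compactness (uniform $L^\infty_tH^2$ bound plus the uniform $L^\infty_tL^2$ bound on $\pt_tu_\eps$, then Arzel\`a--Ascoli in $C([0,T],H^s)$ and a subsequence), the paper instead proves directly that $\{u_\eps\}$ is a Cauchy sequence in $C([0,T],L^2)$ via a difference estimate in the spirit of \cite[Section 2.2]{HO16}, and then interpolates with \eqref{eq:3.2}; this yields convergence of the whole family at once, with no subsequence extraction and no appeal to weak compactness (the paper even remarks that \eqref{eq:3.3} is obtained independently of weak compactness). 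Second, where you upgrade $u$ from $C_w([0,T],H^2)$ to $C([0,T],H^2)$ by identifying it with the Ambrose--Simpson maximal solution and iterating the local theory, the paper argues intrinsically via the Kato--Lai device: weak continuity gives $\norm[u(0)]_{H^2}^2\leq\liminf_{t\to0}\norm[u(t)]_{H^2}^2$, while the $\eps$-uniform energy inequality \eqref{eq:3.6} passed to the limit gives the reverse $\limsup$ bound, whence norm continuity and hence strong continuity. Your route buys brevity by outsourcing work to \cite{AS15}; the paper's route is more self-contained and avoids one point you should make explicit: to identify your limit $u$ (known a priori only in $C_w([0,T],H^2)\cap C([0,T],H^s)$) with the unique solution of \cite{AS15}, you need uniqueness in the class $L^\infty([0,T'],H^2)\cap C([0,T'],L^2)$ rather than literally in $C([0,T'],H^2)$; this does hold (the standard $L^2$ Gronwall difference estimate only uses $W^{1,\infty}$ bounds supplied by $L^\infty_tH^2$), but as written your identification step is circular without this remark. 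With that point spelled out, your proof is complete.
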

\begin{proof}
Following the argument of \cite[Section 2.2]{HO16}, one can prove that $\{u_\eps\}_{0<\eps<1}$ forms a Cauchy sequence in $C([0,T], L^2)$.\footnote{In \cite{HO16}, the operator $(I-\eps\pt^2_x)^{-1}$ is used instead of $J_\eps$ in approximate equations, but this difference does not affect the argument.} Combining this with \eqref{eq:3.2}, one can prove that there exists 
\begin{align*}
u\in C_w([0,T], H^2)\cap 
\bigcap_{s\in[0,2)}C([0,T], H^s)
\end{align*}
and the convergences \eqref{eq:3.3} and \eqref{eq:3.4} hold. We remark that the weak convergence \eqref{eq:3.3} can be obtained independent of weak compactness (see \cite[Lemma 2.5]{HOpre} for more details). 

By \eqref{eq:3.3} and \eqref{eq:3.4} one can easily prove that $u$ is the $H^2$ solution of \eqref{eq:1.1} (see \cite[Section 2.3]{HO16} for details). To show $u\in C([0,T],H^2)$, we use the argument of \cite[Remarks (c)]{KL84}, which is actually used in \cite[Section 4]{AS15} for \eqref{eq:1.1}. 
We briefly explain it here. 
First it follows from the weak continuity of $t\mapsto u(t)\in H^2$ that
\begin{align}
\label{eq:3.5}
\norm[u(0)]_{H^2}^2\leq\liminf_{t\to0}\norm[u(t)]_{H^2}^2.
\end{align}
Next, we note that
\begin{align}
\label{eq:3.6}
\frac{d}{dt} \norm[u_\eps(t)]_{H^2}^2\cleq \norm[u_\eps(t)]_{L^\infty}^{2\sigma-1}\norm[\pt u_\eps(t)]_{L^\infty}\norm[u_\eps(t)]_{H^2}^2
\cleq \norm[u_\eps(t)]_{H^2}^{2\sigma+2},
\end{align}
which is easily obtained by using the equation \eqref{eq:2.1} and Sobolev's embedding (see \cite[Lemma 4.1]{AS15}). 
From \eqref{eq:3.6} and \eqref{eq:3.3} one can prove that
\begin{align}
\label{eq:3.7}
\limsup_{t\to0}\norm[u(t)]_{H^2}^2\leq\norm[u(0)]_{H^2}^2.
\end{align}
Therefore, it follows from \eqref{eq:3.5} and \eqref{eq:3.7} that the strong continuity of $t\mapsto u(t)\in H^2$ holds at $t=0$. This argument does not depend on the initial time and hence we deduce that $u\in C([0,T], H^2)$.
\end{proof}

\subsection{Proof of the theorem}
\label{sec:3.2}

We prove Theorem \ref{thm:1.1} by contradiction. For the maximal $H^2$ solution $u\in C([0,T_{\rm max}), H^2)$ to \eqref{eq:1.1}, we assume that
\begin{align}
\label{eq:3.8}
T_{\rm max}<\infty\quad\text{and}\quad
\sup_{t\in[0,T_{\rm max})}\norm[u(t)]_{H^1}<\infty. 
\end{align}
We need the following result in order to apply Theorem \ref{thm:2.4}. 
Its proof is given in Section \ref{sec:3.4} below.
\begin{lemma}
\label{lem:3.2}
Assume \eqref{eq:3.8} for the maximal $H^2$ solution of \eqref{eq:1.1}. Then, there exists $M_*>0$ such that the following holds: For any $T\in(0,T_{\rm max})$ there exists $\eps_*\in (0,1)$ such that 
\begin{align}
\label{eq:3.9}
\sup_{\eps\in(0,\eps_*)}\sup_{t\in[0,T]}\norm[u_\eps(t)]_{H^1}\leq M_*.
\end{align}
\end{lemma}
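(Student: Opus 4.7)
The plan is a continuity argument powered by a uniform-in-$\eps$ estimate in $H^s$ for some $s\in(3/2,2)$, namely the one announced in Appendix \ref{sec:A}. Set $M\ce\sup_{t\in[0,T_{\rm max})}\norm[u(t)]_{H^1}$, which is finite by \eqref{eq:3.8}, and pick $M_*\ce 2M+1$. Fix $T\in(0,T_{\rm max})$ and assume by contradiction that no admissible $\eps_*$ exists: there are $\eps_k\downarrow 0$ and $t_k\in(0,T]$ with $\norm[u_{\eps_k}(t_k)]_{H^1}>M_*$. Since $u_{\eps_k}\in C^1(\R,H^\infty)$ by Lemma \ref{lem:2.2}, I can replace each $t_k$ by the first time at which $\norm[u_{\eps_k}(t_k)]_{H^1}=M_*$, so that $\norm[u_{\eps_k}(t)]_{H^1}\leq M_*$ for $t\in[0,t_k]$; after extracting a subsequence I assume $t_k\to t_*\in[0,T]$.

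The next step is to apply the Appendix \ref{sec:A} estimate: under the a priori bound $\norm[u_{\eps_k}]_{L^\infty([0,t_k],H^1)}\leq M_*$, it will supply a constant $K=K(M_*,\norm[\varphi]_{H^s},T)$, independent of $k$, with $\norm[u_{\eps_k}(t)]_{H^s}\leq K$ for $t\in[0,t_k]$. Combining this with the Sobolev embedding $H^s\hookrightarrow W^{1,\infty}$ (valid since $s>3/2$) and the equation \eqref{eq:3.1}, one gets a uniform bound on $\pt_t u_{\eps_k}$ in $L^\infty([0,t_k],H^{s-2})$; interpolating $H^1$ between $H^{s-2}$ and $H^s$ then yields a uniform H\"older-in-time estimate of $u_{\eps_k}\colon[0,t_k]\to H^1$ with exponent $(s-1)/2>0$.

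With the compact embedding $H^s\hookrightarrow\hookrightarrow H^1$ and the Aubin--Lions lemma (or the H\"older bound directly), I extract via a diagonal argument on exhaustive subintervals $[0,\tau]\subset[0,t_*)$ a subsequence converging in $C_{\rm loc}([0,t_*),H^1)$ to some $u^*\in L^\infty_{\rm loc}([0,t_*),H^s)$ that solves \eqref{eq:1.1} weakly with $u^*(0)=\varphi$ (the passage to the limit in the nonlinearity uses $J_{\eps_k}f\to f$ in $L^2$ for fixed $f$, together with the $H^s$ bound on $u_{\eps_k}$). Both $u$ and $u^*$ lie in $L^\infty([0,t_*),W^{1,\infty})$, so a standard $L^2$-energy estimate for $u-u^*$ and Gr\"onwall give $u^*=u$ on $[0,t_*)$; the uniform H\"older bound then extends the convergence to the endpoint, so $u_{\eps_k}(t_k)\to u(t_*)$ in $H^1$ and hence $M_*=\lim_k\norm[u_{\eps_k}(t_k)]_{H^1}=\norm[u(t_*)]_{H^1}\leq M<M_*$, a contradiction. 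The main obstacle is the uniform-in-$\eps$ $H^s$ bound itself: because the nonlinearity contains a derivative and a possibly non-integer power $|u|^{2\sigma}$, the $H^s$-energy/commutator estimates have to be arranged so as to close against $\norm[u_{\eps_k}]_{H^1}$ alone (any $H^2$ bound would be circular at this stage), and the window $s\in(3/2,2)$ is precisely what makes this feasible; this is what Appendix \ref{sec:A} takes care of.
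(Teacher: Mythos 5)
There is a genuine gap at the pivotal step of your argument: the claim that Appendix \ref{sec:A}, ``under the a priori bound $\norm[u_{\eps_k}]_{L^\infty([0,t_k],H^1)}\leq M_*$, will supply a constant $K$ independent of $k$ with $\norm[u_{\eps_k}(t)]_{H^s}\leq K$ for all $t\in[0,t_k]$.'' The appendix does not (and cannot) deliver an estimate that closes against the $H^1$ norm alone. What it proves is the differential inequality $\frac{d}{dt}\norm[u_\eps]_{H^s}^2\cleq \norm[v_\eps]_{H^1}^{2\sigma-1}\norm[v_\eps]_{H^s}^{3}$, whose right-hand side is \emph{cubic} in the $H^s$ norm; the commutator and chain-rule bounds (Lemmas \ref{lem:A.1} and \ref{lem:A.3}) unavoidably involve $\norm[\pt v_\eps]_{L^\infty}$, which is controlled by $H^s$ but not by $H^1$ in one dimension. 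Consequently the appendix only yields a \emph{local-in-time} bound, on a window $[0,T(M)]$ whose length depends on the $H^s$ size of the data, exactly as stated in Proposition \ref{prop:3.3}. This is the same obstruction highlighted in the introduction (the naive $H^2$ energy inequality is useless even under an a priori $H^1$ bound, because the nonlinearity loses a derivative), and it reappears verbatim at the $H^s$ level. Your final sentence asserts that the estimates ``close against $\norm[u_{\eps_k}]_{H^1}$ alone'' and that the window $s\in(3/2,2)$ makes this feasible; neither is true, and with only a local $H^s$ bound your continuity/contradiction scheme does not reach times $t_k$ beyond $T(M)$.

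The paper's proof is designed precisely to circumvent this. It keeps your overall goal (uniform $H^s$ control, then convergence, then the $H^1$ bound) but replaces the single global estimate by a finite iteration: on each window of length $T_0(M)$ Proposition \ref{prop:3.3} gives uniform $H^s$ bounds \emph{and} the exponential $H^2$ growth bound \eqref{eq:3.14}; the $H^2$ bound is what allows Lemma \ref{lem:3.1} to be invoked (its hypothesis \eqref{eq:3.2} is a uniform $H^2$ bound) to get $u_\eps\to u$ in $C([0,kT_0],H^s)$, and the limit $u\in C([0,T],H^2)$ is then used to re-initialize the $H^s$ data bound $\norm[u_\eps(kT_0)]_{H^s}\leq M$ for $\eps$ small, so that the local proposition can be applied again. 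After at most $[T/T_0]+1$ steps one covers $[0,T]$ and extracts the $H^1$ bound. If you want to salvage your write-up, you must either prove a tame estimate of the form $\frac{d}{dt}\norm[u_\eps]_{H^s}^2\leq C(\norm[u_\eps]_{H^1})\norm[u_\eps]_{H^s}^2$ (which is not available here) or adopt this windowed bootstrap, in which case you also need the $H^2$ growth control to justify the passage to the limit at each restart.
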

We now complete the proof of Theorem \ref{thm:1.1} assuming Lemma \ref{lem:3.2}.
\begin{proof}[Proof of Theorem \ref{thm:1.1}]
Take any $T\in (0,T_{\rm max})$. Once Lemma \ref{lem:3.2} is established, then we can conclude as follows by using the identity \eqref{eq:2.4}.
 After integrating in time \eqref{eq:2.4} and elementary considerations,
there exist constants $C_1, C_2>0$ depending only on $M_*$ (neither on $T$ nor $\eps$) such that  \begin{align}
\label{eq:3.10}
\norm[u_\eps(t)]_{H^2}^2\leq C_1(1+\norm[\varphi]_{H^2}^{2})+C_2\int_0^t(1+\norm[u_\eps(\tau)]_{H^2}^2)d\tau
\end{align}
for $\eps\in(0,\eps_*)$ and $t\in[0,T]$. Therefore, by Gronwall's lemma we deduce that
\begin{align}
\label{eq:3.11}
\norm[u_\eps(t)]_{H^2}^2\leq C_1(1+\norm[\varphi]_{H^2}^{2}) e^{C_2t},
\end{align}
which in particular implies that 
\begin{align*}
\sup_{\eps\in(0,\eps_*)}\sup_{t\in[0,T]}\norm[u_\eps(t)]_{H^2(\T)}^2
\leq C_1(1+\norm[\varphi]_{H^2}^{2}) e^{C_2T}.
\end{align*}
Therefore, it follows from \eqref{eq:3.11} and Lemma \ref{lem:3.1} that
\begin{align*}
\sup_{t\in [0, T]} \norm[u(t)]_{H^2}^2\leq C_1(1+\norm[\varphi]_{H^2}^{2})e^{C_2T}
\end{align*} 
which implies a contradiction in the case $T_{\rm max}<\infty$.
\end{proof}
The remaining of this section is devoted to the proof of Lemma \ref{lem:3.2}. 
\subsection{Small data case} 
\label{sec:3.3}

If we assume the $H^1$ smallness of the initial data, the proof of Lemma \ref{lem:3.2} becomes simpler. 
We define the energy of \eqref{eq:2.1} by
\begin{align*}
E_\eps(u)=\frac{1}{2}\int |\pt u|^2 -\frac{1}{2\sigma+2}
\Re\int |J_\eps u|^{2\sigma}\pt J_\eps u\widebar{J_\eps u}
\quad \text{for}~\eps\in(0,1).
\end{align*}
It is easily verified that the solution $u_\eps$ constructed by Lemma \ref{lem:2.2} satisfies the conservation law of the energy
\begin{align*}
E_\eps (u_\eps (t))= E_\eps(u_\eps(0))=E_\eps(J_\eps\varphi)
\end{align*}
for all $t\in\R$. Based on the conservation laws of the $L^2$ norm and the energy, we deduce that for all $t\in\R$,
\begin{align*}
\frac{1}{2}\norm[u_\eps(t)]_{H^1}^2
&=\frac{1}{2}\norm[u_\eps(t)]_{L^2}^2
+E_\eps(u_\eps(t))+\frac 1{2\sigma+2} \Re\int|v_\eps(t)|^{2\sigma}\pt v_\eps(t)\widebar{v_\eps(t)}
\\
&\leq \frac{1}{2}\norm[J_\eps\varphi]_{L^2}^2
+E_\eps(J_\eps\varphi)+\frac{c}{2\sigma +2}\norm[v_\eps(t)]_{H^1}^{2\sigma+2}
\\
&\leq
\frac{1}{2}\norm[\varphi]_{H^1}^2+\frac{c}{2\sigma+2}\norm[\varphi]_{H^1}^{2\sigma+2}+\frac{c}{2\sigma +2}\norm[u_\eps(t)]_{H^1}^{2\sigma+2},
\end{align*}
where $v_\eps=J_\eps u_\eps$ and $c$ is a positive constant. Therefore, we obtain the relation
\begin{align}
\label{eq:3.12}
h(\norm[u_\eps(t)]_{H^1})\leq \frac{1}{2}\norm[\varphi]_{H^1}^2+\frac{c}{2\sigma+2}\norm[\varphi]_{H^1}^{2\sigma+2}
\quad\text{for all}~t\in\R,
\end{align}
where the function $h:[0,\infty)\to\R$ is defined by $h(s)=s^2/2-c/(2\sigma+2)s^{2\sigma+2}$. We note that $h$ has a unique maximum point $m\ce (1/c)^{1/(2\sigma)}$. We take $\delta>0$ small enough so that
\begin{align*}
\norm[\varphi]_{H^1}<\delta\implies
\frac{1}{2}\norm[\varphi]_{H^1}^2+\frac{c}{2\sigma+2}\norm[\varphi]_{H^1}^{2\sigma+2}<h(m).
\end{align*}
Therefore, it follows from \eqref{eq:3.12} and the strong continuity $t\mapsto u_\eps(t)\in H^1(\T)$ that if $\norm[\varphi]_{H^1}<\delta$, then $\norm[u_\eps(t)]_{H^1}<m$ for all $t\in\R$.  Hence, we have
\begin{align*}
\sup_{\eps\in(0,1)}\sup_{t\in\R}\norm[u_\eps(t)]_{H^1}\leq m,
\end{align*}
which in particular implies the conclusion of Lemma \ref{lem:3.2}.

\subsection{General case}
\label{sec:3.4}
In the general case (no smallness assumption), the following result is useful in the proof of Lemma \ref{lem:3.2}. 
\begin{proposition}
\label{prop:3.3}
Let $s\in(3/2,2)$.
For any $M>0$ there exist $T(M)>0$ and $C_3(M)>0$ such that for $\varphi_\eps\in H^\infty$ satisfying $\norm[\varphi_\eps]_{H^s}\leq M$, smooth solutions of \eqref{eq:3.1} with $u_{\varepsilon}(0)=\varphi_\varepsilon$
satisfy
\begin{align}
\label{eq:3.13}
\sup_{\eps\in(0,1)}\sup_{t\in[0,T(M)]}\norm[u_\eps(t)]_{H^s}\leq C_3(M).
\end{align}
Moreover, there exists $C_4(M)>0$ such that 
\begin{align}
\label{eq:3.14}
\norm[u_\eps(t)]_{H^2}\leq \norm[u_\eps(0)]_{H^2}\exp(C_4(M)t),\quad t\in[0, T(M)]
\end{align}
where the constant $C_4(M)$ is independent of $\eps\in(0,1)$.
\end{proposition}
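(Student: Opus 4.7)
The plan is to exploit the Sobolev embedding $H^s(\T) \hookrightarrow W^{1,\infty}(\T)$, valid precisely for $s > 3/2$. Once a uniform (in $\eps$) bound $\|u_\eps(t)\|_{H^s} \leq C_3(M)$ is established on $[0,T(M)]$, the second assertion~\eqref{eq:3.14} is almost immediate: estimate~\eqref{eq:3.6} and Sobolev give $\frac{d}{dt}\|u_\eps\|_{H^2}^2 \leq C\,C_3(M)^{2\sigma}\|u_\eps\|_{H^2}^2$, and Gronwall's lemma yields~\eqref{eq:3.14}. So essentially all the work is concentrated on the uniform $H^s$-bound.

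For that bound, set $\Lambda^s = (1-\pt^2)^{s/2}$ and compute the time derivative of $\|u_\eps\|_{H^s}^2$ by pairing $\Lambda^s$ applied to~\eqref{eq:3.1} with $\Lambda^s u_\eps$ in $L^2$. The skew-adjointness of $i\pt^2$ kills the linear contribution, and using $J_\eps^2 = J_\eps$ together with the fact that $J_\eps$ commutes with $\Lambda^s$, the nonlinearity reduces (writing $v_\eps = J_\eps u_\eps$) to
$$-\Re\int \Lambda^s\bigl(|v_\eps|^{2\sigma}\pt v_\eps\bigr)\,\overline{\Lambda^s v_\eps}.$$
I would split the integrand as $|v_\eps|^{2\sigma}\Lambda^s\pt v_\eps + [\Lambda^s,|v_\eps|^{2\sigma}]\pt v_\eps$. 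The first piece, since $\Lambda^s$ commutes with $\pt$, gives after one integration by parts $\tfrac12\int\pt(|v_\eps|^{2\sigma})|\Lambda^s v_\eps|^2$, bounded by $\|v_\eps\|_{W^{1,\infty}}^{2\sigma}\|v_\eps\|_{H^s}^2 \lesssim \|v_\eps\|_{H^s}^{2\sigma+2}$. The commutator piece is treated via a Kato--Ponce-type estimate
$$\|[\Lambda^s,f]g\|_{L^2} \lesssim \|\pt f\|_{L^\infty}\|g\|_{H^{s-1}} + \|f\|_{H^s}\|g\|_{L^\infty},$$
applied with $f=|v_\eps|^{2\sigma}$ and $g=\pt v_\eps$, combined with a Moser-type composition bound $\||v_\eps|^{2\sigma}\|_{H^s} \lesssim \|v_\eps\|_{L^\infty}^{2\sigma-1}\|v_\eps\|_{H^s}$; using $H^s \hookrightarrow W^{1,\infty}$, this contribution is again $O(\|v_\eps\|_{H^s}^{2\sigma+2})$. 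Since $\|v_\eps\|_{H^s}\leq\|u_\eps\|_{H^s}$, the net outcome is the differential inequality $\frac{d}{dt}\|u_\eps\|_{H^s}^2 \leq C\|u_\eps\|_{H^s}^{2\sigma+2}$, whose explicit integration furnishes a life-span $T(M) \sim M^{-2\sigma}$ and a bound $C_3(M) \sim M$, both independent of $\eps$.

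The main technical obstacle will be the fractional chain rule for the non-integer composition $z \mapsto |z|^{2\sigma}$ at the non-integer Sobolev scale $H^s$ on $\T$. Since $\sigma > 1$ ensures $|z|^{2\sigma}$ is of class $C^2$, standard paraproduct / Littlewood--Paley techniques should yield the required Moser and commutator estimates with constants uniform in $\eps$ (the $\eps$-uniformity is essentially free once one uses that $J_\eps$ commutes with $\Lambda^s$ and contracts every $H^t$). Carefully verifying these fractional estimates on the torus is presumably the content of the self-contained Appendix~\ref{sec:A}.
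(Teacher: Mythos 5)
Your proposal follows essentially the same route as the paper's Appendix~\ref{sec:A}: an $H^s$-energy estimate in which the top-order term $|v_\eps|^{2\sigma}D^s\pt v_\eps$ is isolated and handled by integration by parts, while the remainder is controlled by a commutator estimate (your Kato--Ponce bound plays the role of the Saut--Temam inequality, Lemma~\ref{lem:A.1}) together with a Moser-type bound for $|v_\eps|^{2\sigma}$, yielding $\tfrac{d}{dt}\|u_\eps\|_{H^s}^2\lesssim\|u_\eps\|_{H^s}^{2\sigma+2}$, a lifespan $T(M)\sim M^{-2\sigma}$, and then \eqref{eq:3.14} via \eqref{eq:3.6} and Gronwall. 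The one step you defer to ``standard paraproduct techniques''---the fractional chain rule for the non-smooth composition $z\mapsto|z|^{2\sigma}$ at order $s\in(3/2,2)$---is precisely what the paper proves self-containedly in Lemma~\ref{lem:A.3}, via the factorization $D^s=\mathcal{H}D^{s-1}\pt$ and the Gagliardo-seminorm characterization of $\dot H^{s-1}$, and your observation that $\sigma>1$ is what makes this work is the correct reason.
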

\begin{proof}
The derivation of \eqref{eq:3.13} can be done in the same way as \cite[Section 4]{TF80}. However, as a technical issue, we need to pay attention to fractional derivatives for nonlinearities with fractional powers. For the convenience of the reader, we give a self-contained proof of \eqref{eq:3.13} in Appendix \ref{sec:A}.

Once we get \eqref{eq:3.13}, it follows from the energy inequality \eqref{eq:3.6} and Sobolev's embedding that 
\begin{align*}
\frac{d}{dt} \norm[u_\eps(t)]_{H^2}^2
\cleq C_3(M)^{2\sigma}\norm[u_\eps(t)]_{H^2}^2.
\end{align*}
Applying Gronwall's lemma, we conclude \eqref{eq:3.14}.
\end{proof}
\begin{proof}[Proof of Lemma \ref{lem:3.2}]
Fix some $s\in(3/2,2)$. We set
\begin{align*}
M_*=\sup_{t\in[0,T_{\rm max})}\norm[u(t)]_{H^1}+1
\end{align*}
and for any fixed  $T\in(0,T_{\rm max})$ we set
\begin{align*}
M=\sup_{t\in[0,T]}\norm[u(t)]_{H^s}+1.
\end{align*}
Let $u_\eps$ be the smooth solution of \eqref{eq:3.1} with $u_\eps(0)=J_\eps\varphi$. By Proposition \ref{prop:3.3} there exists $T_0=T_0(M)>0$ such that 
\begin{align*}
\sup_{\eps\in(0,1)}\sup_{t\in[0,T_0]}\norm[u_{\eps}(t)]_{H^s}\leq C_3(M)
\end{align*} 
and 
\begin{align}
\label{eq:3.15}
\norm[u_\eps(t)]_{H^2}\leq \norm[u_\eps(0)]_{H^2}\exp(C_4(M)t)
\leq \norm[\varphi]_{H^2}\exp(C_4(M)t),\quad t\in[0,T_0].
\end{align}
Since we obtained $H^2$ boundedness for $u_\eps$, it follows from Lemma \ref{lem:3.1} that
\begin{align*}
\norm[u-u_\eps]_{C([0,T_0], H^s)} \to 0\quad\text{as}~\eps\downarrow0.
\end{align*}
In particular, there exists $\eps_1\in(0,1)$ such that for any $\eps\in(0,\eps_1)$
\begin{align*}
\norm[u_\eps(T_0)]_{H^s}\leq\norm[u(T_0)]_{H^s}+1\leq M.
\end{align*}
Next, we apply Proposition \ref{prop:3.3} with $u_\eps(T_0)$ as the initial data of \eqref{eq:3.1}. Thus, we obtain
\begin{align*}
\norm[u_\eps(t+T_0)]_{H^2}\leq \norm[u_\eps(T_0)]_{H^2}\exp(C_4(M)t)
\leq \norm[\varphi]_{H^2}\exp\bigl(C_4(M)(t+T_0)\bigr),
\quad t\in[0,T_0], 
\end{align*}
where we have used \eqref{eq:3.15} in the last inequality. Therefore, we obtain
\begin{align*}
\norm[u_\eps(t)]_{H^2}\leq \norm[\varphi]_{H^2}\exp(C_4(M)t),\quad t\in[0,2T_0],
\end{align*}
and deduce by Lemma \ref{lem:3.1} that
\begin{align*}
\norm[u-u_\eps]_{C([0,2T_0], H^s)} \to 0\quad\text{as}~\eps\downarrow0.
\end{align*}
In particular, there exists $\eps_2\in(0,\eps_1)$ such that for any $\eps\in(0,\eps_2)$
\begin{align*}
\norm[u_\eps(2T_0)]_{H^s}\leq\norm[u(2T_0)]_{H^s}+1\leq M.
\end{align*}
Iterating this argument for a finite number of times, namely at most $[\frac {T}{T_0}]+1$, we deduce that there exists $\eps_*\in (0,1)$ such that for any $\eps\in(0,\eps_*)$
\begin{align*}
\norm[u_\eps(t)]_{H^2}\leq \norm[\varphi]_{H^2}\exp(C_4(M)t),\quad t\in[0,T].
\end{align*}
From Lemma \ref{lem:3.1} again we obtain
\begin{align*}
\norm[u-u_\eps]_{C([0,T], H^1)} \to 0\quad\text{as}~\eps\downarrow0.
\end{align*}
By choosing $\eps_*$ possibly smaller, we deduce that
\begin{align*}
\sup_{\eps\in(0,\eps_*)}\sup_{t\in[0,T]}\norm[u_\eps(t)]_{H^1}\leq M_*.
\end{align*}
This completes the proof.
\end{proof}

\subsection{Comments on our proof}
\label{sec:3.5}

In order to prove Theorem \ref{thm:1.1} via the $H^2$ identity in Theorem \ref{thm:2.4}, we need to derive the uniform boundedness of approximate solutions in $H^1$ from the assumption \eqref{eq:3.8}. When the initial data is small, the $H^1$ norm of approximate solutions can be uniformly controlled as discussed in Section \ref{sec:3.3}, but this cannot be expected in general for the large data. To prove the uniform boundedness of $u_\eps$ in $H^1$ for the general case, it would be necessary to show that $u$ and $u_\eps$ are reasonably close in the $H^1$ topology. It is sufficient to be able to prove the convergence
\begin{align*}
\norm[u-u_\eps]_{C([0,T],H^1)}\underset{\eps\downarrow0}{\longrightarrow}0\quad\text{for any}~T\in(0,T_{\rm max}),
\end{align*}
but it is not easy to see whether this can be proved just from the information about the $H^1$ boundedness of $u$. This is closely related to the fact that the wellposedness of $H^1$ has not been proved yet for \eqref{eq:1.1}.

Our strategy is to split the time interval and obtain the required boundedness through the local Cauchy theory in $H^2$. However, if we try to apply the $H^2$ local theory directly, we need to show that $u$ and $u_\eps$ are close in the $H^2$ topology on the extension argument, which would require quite a lot of calculations. To avoid this complicated issue, we improve the local Cauchy theory in \cite{AS15} and more specifically prove the uniform boundedness of approximate solutions in $H^s$ for $s\in(3/2,2)$.
%
Proposition \ref{prop:3.3} guarantees that the time width on the extension argument can be taken depending on the $H^s$ norm, which implies that we only need to prove the difference estimate between $u$ and $u_\eps$ in the $L^2$ topology from a viewpoint of interpolation. In order to show uniform estimates in $H^s$, it is necessary to calculate fractional derivatives for fractional nonlinearities, but this calculation would be more economical than the difference estimate in $H^2$.


\section{Heuristic arguments on modified energies}
\label{sec:4}

In this section we will explain how modified energies in Theorem \ref{thm:2.4} were derived from a heuristic discussion. We shall use the notation $A_1(u)\sim A_2(u)$, with 
$A_1(u), A_2(u)$ functionals depending on $u$, to denote the fact that $A_1(u)-A_2(u)$
is a good term in the sense of Definition \ref{def:2.3}.
For the solution of \eqref{eq:1.1}, by a formal calculation (the detailed computation for approximate solutions $u_\varepsilon$ is done along Lemma
\ref{lem:2.6}) we first obtain the relation
\begin{align*}
\frac{d}{dt}\norm[\pt^2 u]_{L^2}^2\sim {-}4B_1(u)-2B_2(u).
\end{align*} 
The bad terms $B_1(u)$ and $B_2(u)$ (see \eqref{eq:2.6} and \eqref{eq:2.7}, respectively) are obstacles when one derives a priori estimates on $H^2$ by using Gronwall's lemma. The key is to find suitable correction terms that can eliminate these bad terms so that
\begin{align}
\label{eq:4.1}
\frac{d}{dt}\l[ \norm[\pt^2 u]_{L^2}^2+\text{(correction terms)}\r]\sim 0.
\end{align}
We note that by the equation \eqref{eq:1.1} we are allowed to replace
$\pt^2u$  by $-i \pt_t u$ (indeed the replacement of $\pt^2u$  by the nonlinear contribution coming for the equation involves less derivatives and provides always good terms). Under this observation, we can do the following formal manipulations, up to harmless multiplicative constants:
\begin{align}
\label{eq:4.2}
\begin{aligned}
B_{1}(u)&\rightarrow\Re\int\pt^2u\pt^2\widebar{u}\pt u\widebar{u}|u|^{2(\sigma-1)}
\\
&\rightarrow-\frac 12 \Re\int i\pt_tu\pt^2\widebar{u}\pt u\widebar{u}|u|^{2(\sigma-1)}
+ \frac 12 \Re\int i \pt^2 u\pt_t \widebar{u}\pt u\widebar{u}|u|^{2(\sigma-1)}
\\&
\rightarrow \frac 12 \frac{d}{dt}\Im\int \pt^2\widebar{u}\pt u|u|^{2\sigma}
-\frac 12 \frac{d}{dt}\Im\int \pt^2u\pt u\widebar{u}^{2}|u|^{2(\sigma-1)}+\hbox{(other terms)},
\\[3pt]
B_2(u)&\rightarrow \Re\int \pt^2u \pt^2u {\pt\widebar{u}}\widebar{u}|u|^{2(\sigma-1)}
\\
&\rightarrow \Re\int -i \pt_t u \pt^2u {\pt\widebar{u}}\widebar{u}|u|^{2(\sigma-1)}
\rightarrow\frac{d}{dt}\Im\int \pt^2 u \pt \bar u |u|^{2\sigma}+\hbox{(other terms)}.
\end{aligned}
\end{align}
Thus one can see that the two terms 
\begin{align*}
\Im\int \pt^2\widebar{u}\pt u|u|^{2\sigma},
\quad
\Im\int \pt^2u\pt u\widebar{u}^2|u|^{2(\sigma-1)}
\end{align*}
appear as the possible correction terms in order to cancel out $B_1(u)$ and $B_2(u)$.
Following this heuristic, we go backward and compute the time derivative of the candidate correctors above following the rule that $\pt_t u$ will be replaced by $i\pt^2 u$.
In view of Lemma \ref{lem:2.5} we can neglect along our computation all the integrands which are, up to conjugate, 
either the product of 
$\pt^2 u$, $(\pt u)^3$ and other factors without derivatives, or 
the product  of $(\pt^2 u)^2$ and other factors without derivatives.
Noting these things and using integration by parts, we obtain
\begin{align*}
\frac{d}{dt}\Im\int\pt^2\widebar{u}\pt u|u|^{2\sigma}
&\sim -2B_1(u)-2B_2(u),
\\
\frac{d}{dt}\sigma\Im\int \pt^2u\pt u\widebar{u}^2|u|^{2(\sigma-1)}
&\sim (\sigma+1)B_2(u) +3B_3(u),
\intertext{where the third bad term $B_3(u)$ is defined by \eqref{eq:2.13}. Fortunately, $B_3(u)$ is handled with another correction term as}
\frac{d}{dt}\sigma(\sigma-1)\Im\int(\pt u)^3\widebar{u}^{3}|u|^{2(\sigma-2)}
&\sim -6B_3(u).
\end{align*}
The third correction term can be found by a similar heuristic argument as in \eqref{eq:4.2}. 
Collecting the above calculations, for $\alpha,\beta\in\R$ we obtain
\begin{align*}
&\frac{d}{dt}
\begin{aligned}[t]
&\Bigl( \norm[\pt^2u]_{L^2}^2-\alpha\Im\int\pt^2\widebar{u}\pt u|u|^{2\sigma} -\beta\sigma\Im\int \pt^2u\pt u\widebar{u}^2|u|^{2(\sigma-1)}
\\
&{}\qquad+\frac{\beta}{2}\sigma(\sigma-1)\Im\int(\pt u)^3\widebar{u}^{3}|u|^{2(\sigma-2)}\Bigr)
\end{aligned}
\\ 
\sim&\,(2\alpha-4)B_1(u)+\l(2\alpha-\beta(\sigma+1)-2\r)B_2(u).
\end{align*}
If we set
\begin{align*}
\alpha=2,~\beta =\frac{2}{\sigma+1},
\end{align*}
then all the coefficients of the last bad terms are canceled out. Hence \eqref{eq:4.1} holds.


\appendix
\section{Local uniform bounds in $H^s$}
\label{sec:A}

In this section we give a complete proof of \eqref{eq:3.13} in Proposition \ref{prop:3.3}. We prove it by following the flow \cite[Section 4]{TF80}.

We define the fractional derivative by
\begin{align*}
(D^su)(x)= \sum_{n\in\Z}|n|^s\hat{u}(n)e^{2\pi i nx},\quad x\in\T.
\end{align*}
We use the following classical result on fractional derivatives.
\begin{lemma}[{\cite[Lemma 1.1]{ST76}}]
\label{lem:A.1}
Let $s>1$ and $\gamma>1/2$. For $u,v\in H^s(\T)$, we have
\begin{align}
\label{eq:A.1}
\norm[D^s(uv)-uD^sv]_{L^2}\cleq \norm[u]_{H^s}\norm[v]_{H^\gamma}+\norm[u]_{H^{\gamma+1}}\norm[v]_{H^{s-1}}.
\end{align}
\end{lemma}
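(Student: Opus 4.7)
The plan is to work entirely on the Fourier side, since $D^s$ on $\T$ is a Fourier multiplier with symbol $|n|^s$. Writing the commutator as
\begin{align*}
\widehat{D^s(uv) - u D^s v}(n) = \sum_{k \in \Z} m(n,k)\, \hat u(k)\, \hat v(n-k),\qquad m(n,k) \ce |n|^s - |n-k|^s,
\end{align*}
the task reduces to bounding this bilinear Fourier multiplier in $\ell^2_n$ by weighted $\ell^2$ norms of $\hat u$ and $\hat v$.

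The first step is to establish the pointwise symbol estimate
\begin{align*}
|m(n,k)| \cleq |k|^s + |k|\,|n-k|^{s-1},
\end{align*}
valid for $s \geq 1$. Since $x\mapsto |x|^s$ is $C^1$ with derivative of size comparable to $|x|^{s-1}$, the mean value theorem yields $|m(n,k)| \leq s\,|k|\,\max(|n|,|n-k|)^{s-1}$; splitting into the regimes $|k| \geq |n-k|$ (in which $|n|\cleq |k|$) and $|k| \leq |n-k|$ (in which $|n| \cleq |n-k|$) produces the two summands above, corresponding respectively to high--low and low--high frequency interactions.

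Inserting this bound into $\|D^s(uv) - uD^sv\|_{L^2}^2 = \sum_n|\sum_k m(n,k)\hat u(k)\hat v(n-k)|^2$ reduces matters, via Minkowski, to estimating in $\ell^2_n$ the two convolutions
\begin{align*}
F_1(n) = \sum_k |k|^s\, |\hat u(k)|\cdot |\hat v(n-k)|,\qquad F_2(n) = \sum_k |k|\, |\hat u(k)|\cdot |n-k|^{s-1}\,|\hat v(n-k)|.
\end{align*}
By Young's inequality $\ell^1 * \ell^2 \to \ell^2$, $\|F_1\|_{\ell^2}\leq \||k|^s\hat u\|_{\ell^2}\cdot \||\hat v|\|_{\ell^1}$, and Cauchy--Schwarz with the weight $(1+|k|)^{-\gamma}$ gives $\||\hat v|\|_{\ell^1}\cleq \|v\|_{H^\gamma}$ precisely because $\gamma > 1/2$ makes the weight square-summable. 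This yields $\|u\|_{H^s}\|v\|_{H^\gamma}$. The same Young/Cauchy--Schwarz argument applied to $F_2$, this time placing $|k|\hat u(k)$ into $\ell^1$ (which costs one derivative above the $\gamma > 1/2$ threshold, i.e. the $H^{\gamma+1}$ norm) and $|k|^{s-1}\hat v(k)$ into $\ell^2$, produces $\|u\|_{H^{\gamma+1}}\|v\|_{H^{s-1}}$. Summing the two contributions gives \eqref{eq:A.1}.

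The main technical point is the sharpness of the symbol bound: a naive mean value estimate of the form $|m(n,k)| \cleq |k|(|n|^{s-1}+|k|^{s-1})$ is not refined enough to produce the asymmetric right-hand side of \eqref{eq:A.1}. One really needs $|n-k|^{s-1}$ (rather than $|n|^{s-1}$) in the low--high regime, which is what allows exactly one derivative to fall on $u$ (via $H^{\gamma+1}$) while the remaining $s-1$ derivatives fall on $v$ (via $H^{s-1}$). Once this refined bound is in hand, the rest of the argument is an elementary pair of applications of Young's and Cauchy--Schwarz inequalities.
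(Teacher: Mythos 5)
Your proof is correct. Note that the paper itself does not prove this lemma at all --- it is quoted verbatim from Saut--Temam \cite{ST76} --- so there is no in-paper argument to compare against; what you have written is essentially the standard self-contained Fourier-side proof of this commutator estimate on the torus. The key symbol bound $\bigl||n|^s-|n-k|^s\bigr|\lesssim |k|^s+|k|\,|n-k|^{s-1}$ is correctly derived: the mean value theorem gives $s\,|k|\max(|n|,|n-k|)^{s-1}$ (the maximum of $|x|$ on the segment is attained at an endpoint), and the two regimes $|k|\geq|n-k|$ and $|k|\leq|n-k|$ produce the two terms. The subsequent applications of Young's inequality $\ell^1*\ell^2\to\ell^2$ and of Cauchy--Schwarz, using $\gamma>1/2$ to place $|\hat v|$ (for $F_1$) and $|k|\,|\hat u(k)|$ (for $F_2$) into $\ell^1$ at the cost of $\|v\|_{H^\gamma}$ and $\|u\|_{H^{\gamma+1}}$ respectively, land exactly on the two terms of \eqref{eq:A.1}. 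One small quibble with your closing remark: the ``naive'' bound $|k|(|n|^{s-1}+|k|^{s-1})$ is in fact equivalent (up to constants) to your refined one, since $|n|^{s-1}\lesssim_s |k|^{s-1}+|n-k|^{s-1}$; the essential point is not the sharpness of the symbol estimate but the bookkeeping of which factor is placed in $\ell^1$ and which in $\ell^2$ in each frequency regime. This does not affect the validity of the argument.
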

Let $s\in(3/2,2)$. For the approximate solution $u_\eps$ to \eqref{eq:3.1},
\begin{align*}
\frac{d}{dt}\norm[D^su_\eps]_{L^2}^2&=2\Im\rbra[i\pt_t D^su_\eps,D^su_\eps]
=-2\Im\rbra[D^s g_\eps(u_\eps), D^su_\eps],
\end{align*}
where we have used the notation \eqref{eq:2.2}. 
We use the notation $v_\eps=J_\eps u_\eps$ and rewrite the last term as
\begin{align}
\label{eq:A.2}
-2\Im\rbra[D^s g(v_\eps)-i|v_\eps|^{2\sigma}D^s\pt v_\eps, D^sv_\eps]-2\Im\rbra[i|v_\eps|^{2\sigma}D^s\pt v_\eps, D^sv_\eps].
\end{align}
Applying \eqref{eq:A.1} with $\gamma=s-1$ to the first term, we obtain
\begin{align}
\label{eq:A.3}
\abs[ {\rbra[D^s g(v_\eps)-i|v_\eps|^{2\sigma}D^s\pt v_\eps, D^sv_\eps]} ]\cleq\norm[|v_\eps|^{2\sigma}]_{H^s}\norm[v_\eps]_{H^s}
\norm[D^s v_\eps]_{L^2}.
\end{align}
Now we need to calculate $\norm[D^s(|v_\eps|^{2\sigma})]_{L^2}$. For this purpose, the following characterization of the homogeneous Sobolev norm is convenient.
\begin{lemma}[{\cite[Proposition 1.3]{BO13}}]
\label{lem:A.2}
Let $\gamma\in(0,1)$. Then, for $u\in \dot{H}^\gamma(\T^d)$ we have the relation 
  \begin{align*}
  \norm[u]_{\dot{H}^\gamma(\T^d)}^2 \sim 
  \iint_{\T^d\times[-\frac{1}{2},\frac{1}{2})^d} \frac{|u(x+y)-u(x)|^2}{|y|^{d+2\gamma}}dxdy,
  \end{align*}
Here the notation $A\sim B$ means that both $A\cleq B$ and $B\cleq A$ hold true.
\end{lemma}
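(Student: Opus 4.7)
The plan is to reduce the Gagliardo-type double integral to a sum over Fourier modes via Parseval and then estimate a scalar integral $I_n$ indexed by $n\in\Z^d$, showing $I_n\sim |n|^{2\gamma}$ uniformly. First I would expand $u$ in a Fourier series, so that $u(x+y)-u(x)=\sum_{n\in\Z^d}\hat u(n)(e^{2\pi i n\cdot y}-1)e^{2\pi i n\cdot x}$; applying Parseval in $x$ for each fixed $y$, using $|e^{2\pi i n\cdot y}-1|^2=4\sin^2(\pi n\cdot y)$, and interchanging sum and $y$-integral via Tonelli yields
\begin{equation*}
\iint_{\T^d\times[-\frac12,\frac12)^d}\frac{|u(x+y)-u(x)|^2}{|y|^{d+2\gamma}}\,dxdy
=4\sum_{n\in\Z^d}|\hat u(n)|^2\, I_n,
\qquad
I_n\ce\int_{[-\frac12,\frac12)^d}\frac{\sin^2(\pi n\cdot y)}{|y|^{d+2\gamma}}\,dy.
\end{equation*}
Since $I_0=0$, matching this against the Parseval identity $\norm[u]_{\dot H^\gamma(\T^d)}^2=\sum_n |n|^{2\gamma}|\hat u(n)|^2$ reduces the lemma to proving the two-sided bound $I_n\sim |n|^{2\gamma}$ uniformly for $n\in\Z^d\setminus\{0\}$.

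To get such a bound, I would substitute $z=|n|y$ and set $\omega\ce n/|n|$, which rescales $I_n$ to $|n|^{2\gamma}\int_{[-|n|/2,|n|/2)^d}\sin^2(\pi\omega\cdot z)|z|^{-d-2\gamma}\,dz$. For the upper bound the integration domain is enlarged to $\R^d$ and split into $|z|\leq 1$, where $\sin^2(\pi\omega\cdot z)\leq\pi^2|z|^2$ is integrable against $|z|^{-d-2\gamma}$ near $0$ because $\gamma<1$, and $|z|>1$, where $\sin^2\leq 1$ is integrable at infinity because $\gamma>0$. Rotation invariance of Lebesgue measure and of $|z|$ shows that the resulting integral depends on $\omega$ only through $|\omega|=1$, so $I_n\leq C_+|n|^{2\gamma}$ with $C_+$ independent of $n$.

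For the lower bound I would restrict to the slab $A_\omega\ce\{z\in\R^d:|z|\leq 1/2,\ |\omega\cdot z|\geq 1/8\}$, which lies inside $[-|n|/2,|n|/2)^d$ for every $n\neq 0$ and whose Lebesgue measure is a positive constant $m$ independent of $\omega$ by rotation invariance; on $A_\omega$ one has $\sin^2(\pi\omega\cdot z)\geq \sin^2(\pi/8)$ and $|z|^{-d-2\gamma}\geq 2^{d+2\gamma}$, giving $I_n\geq C_-|n|^{2\gamma}$. The main obstacle is precisely this uniform-in-$\omega$ positivity: the integrand vanishes on the hyperplane $\{\omega\cdot z=0\}$, so the integration region must stay bounded away from it while also remaining inside the base cube uniformly in the direction $\omega$. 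A thin slab accomplishes both requirements, whereas a centered ball would not, since $\sin^2(\pi\omega\cdot z)$ could be made arbitrarily small on it by rotating $\omega$ toward a coordinate axis after rescaling. Combining the two estimates produces $I_n\sim |n|^{2\gamma}$ uniformly, and substitution into the Parseval identity above completes the proof.
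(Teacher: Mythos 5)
The paper offers no proof of this lemma; it is quoted directly from B\'enyi--Oh \cite{BO13}, so there is nothing internal to compare against. Your argument --- Parseval in $x$ for fixed $y$, Tonelli, and the reduction to the two-sided bound $I_n\sim|n|^{2\gamma}$ via the scaling $z=|n|y$ with the upper bound from $\sin^2(\pi t)\leq\min(\pi^2t^2,1)$ and the lower bound from a slab bounded away from the hyperplane $\{\omega\cdot z=0\}$ --- is correct and complete, and is essentially the standard proof of this equivalence. One marginal quibble: your parenthetical claim that a centered ball would fail for the lower bound is mistaken, since $\int_{|z|\leq 1/2}\sin^2(\pi\omega\cdot z)|z|^{-d-2\gamma}dz$ is itself invariant under rotations of $\omega$ and hence already a positive constant independent of $\omega$; but this aside does not affect the validity of the slab argument you actually use.
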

Inspired from the argument of \cite[Section 4]{CHO}, we prove the following result.
\begin{lemma}
\label{lem:A.3}
Let $s\in(3/2,2)$. For $u\in H^s(\T)$, we have
\begin{align}
\label{eq:A.4}
\norm[D^s (|u|^{2\sigma})]_{L^2}
\cleq\norm[u]_{L^\infty}^{2(\sigma-1)}\norm[\pt u]_{L^\infty}\norm[D^{s-1}u]_{L^2}+\norm[u]_{L^\infty}^{2\sigma-1}\norm[D^su]_{L^2}.
\end{align}
\end{lemma}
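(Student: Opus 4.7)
The plan is to reduce one derivative via the product rule and then control the remaining fractional derivative through the integral characterisation in Lemma~\ref{lem:A.2}. Set $f=|u|^{2\sigma}$ and $h(z)\ce|z|^{2(\sigma-1)}z$ for $z\in\C$, so that
\begin{align*}
\pt f = 2\sigma\Re\bigl(\,\overline{h(u)}\,\pt u\bigr).
\end{align*}
Since on the Fourier side $|n|^s=|n|^{s-1}\cdot|n|$ yields $\norm[D^s f]_{L^2}\sim \norm[D^{s-1}\pt f]_{L^2}$ (both sides vanish on the zero mode), Lemma~\ref{lem:A.2} applied with $\gamma=s-1\in(1/2,1)$ to $\pt f$ gives
\begin{align*}
\norm[D^s(|u|^{2\sigma})]_{L^2}^2
\sim \iint_{\T\x[-\frac12,\frac12)}\frac{|\pt f(x+y)-\pt f(x)|^2}{|y|^{2s-1}}\,dxdy.
\end{align*}

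I would then split the pointwise increment into the two natural pieces
\begin{align*}
\pt f(x+y)-\pt f(x)
= 2\sigma\Re\Bigl(\bigl[\overline{h(u(x+y))}-\overline{h(u(x))}\bigr]\pt u(x+y)
+\overline{h(u(x))}\bigl[\pt u(x+y)-\pt u(x)\bigr]\Bigr),
\end{align*}
which we label (A) and (B). For (B), the pointwise bound $|h(u(x))|=|u(x)|^{2\sigma-1}\leq \norm[u]_{L^\infty}^{2\sigma-1}$ combined with Lemma~\ref{lem:A.2} applied to $\pt u$ controls its contribution to $\norm[D^s(|u|^{2\sigma})]_{L^2}$ by $\norm[u]_{L^\infty}^{2\sigma-1}\norm[\pt u]_{\dot{H}^{s-1}}\sim \norm[u]_{L^\infty}^{2\sigma-1}\norm[D^s u]_{L^2}$, which is precisely the second term on the right of \eqref{eq:A.4}.

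The main obstacle is the pointwise Lipschitz estimate
\begin{align*}
|h(a)-h(b)|\leq C_\sigma \max(|a|,|b|)^{2(\sigma-1)}|a-b|,\qquad a,b\in\C,
\end{align*}
which we need uniformly in $\sigma>1$, including the delicate range $\sigma\in(1,3/2)$ where the exponent $2(\sigma-1)\in(0,1)$ is fractional and a direct H\"older-type argument only produces $|a-b|^{2(\sigma-1)}$. I would establish it by verifying that $h$ is of class $C^1(\C)$ with $|Dh(z)|\leq C_\sigma |z|^{2(\sigma-1)}$: computing the partial derivatives of $h$ in Cartesian coordinates gives expressions of the form $|z|^{2\sigma-4}\cdot (\text{quadratic polynomial in }\Re z,\Im z)$, whose moduli are bounded by $C_\sigma|z|^{2(\sigma-1)}$ and continuous at the origin precisely because $\sigma>1$. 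The Lipschitz bound then follows by integrating $Dh$ along the segment from $b$ to $a$. Granting this, (A) is dominated pointwise by $C_\sigma \norm[u]_{L^\infty}^{2(\sigma-1)}\norm[\pt u]_{L^\infty}|u(x+y)-u(x)|$, and Lemma~\ref{lem:A.2} applied this time to $u$ bounds its contribution by $\norm[u]_{L^\infty}^{2(\sigma-1)}\norm[\pt u]_{L^\infty}\norm[D^{s-1}u]_{L^2}$. Summing the (A) and (B) contributions and taking square roots gives \eqref{eq:A.4}.
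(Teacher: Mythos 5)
Your proposal is correct and follows essentially the same route as the paper: reduce $D^s$ to $D^{s-1}\pt$ (the paper phrases this via the Hilbert transform identity $D^s=\cH D^{s-1}\pt$), write $\pt(|u|^{2\sigma})$ as a constant times $\Re\bigl(|u|^{2(\sigma-1)}\bar u\,\pt u\bigr)$, apply the difference-quotient characterization of Lemma~\ref{lem:A.2} with $\gamma=s-1$, and split the increment of the product into the same two pieces, using the pointwise Lipschitz bound $|h(a)-h(b)|\cleq(|a|^{2(\sigma-1)}+|b|^{2(\sigma-1)})|a-b|$ for the first. The only difference is that you justify this Lipschitz bound in more detail (via the $C^1$ bound $|Dh(z)|\cleq|z|^{2(\sigma-1)}$, valid since $\sigma>1$), which the paper simply asserts.
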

\begin{proof}
We set $\gamma=s-1$. We use the decomposition
\begin{align*}
D^s=\cH\pt D^{s-1} =\cH D^{\gamma}\pt,
\end{align*}
where $\cH$ is the Hilbert transform. Note that
\begin{align*}
\pt(|u|^{2\sigma})=\sigma|u|^{2(\sigma-1)}\l(\pt u\widebar{u}+u\pt\widebar{u} \r).
\end{align*}
Thus, we have
\begin{align*}
\norm[D^s(|u|^{2\sigma})]_{L^2}
=\norm[D^\gamma\pt(|u|^{2\sigma})]_{L^2}
\leq2\sigma\norm[D^\gamma(|u|^{2\sigma-1}\widebar{u}\pt u)]_{L^2}.
\end{align*}
We set $f(u)=|u|^{2(\sigma-1)}\widebar{u}$. Applying Lemma \ref{lem:A.2}, we obtain
\begin{align*}
\norm[D^\gamma\l(f(u)\pt u\r) ]_{L^2}^2
&\sim
\iint_{\T\times[-\frac{1}{2},\frac{1}{2})} 
|y|^{-1-2\gamma}\abs[(f(u)\pt u)(x+y)-(f(u)\pt u)(x) ]^2
dxdy
\\
&\cleq
\begin{aligned}[t]
&\iint_{\T\times[-\frac{1}{2},\frac{1}{2})} 
|y|^{-1-2\gamma}\abs[\l(f(u)(x+y)-f(u)(x)\r) \pt u(x+y) ]^2
dxdy
\\
&{}+\iint_{\T\times[-\frac{1}{2},\frac{1}{2})} 
|y|^{-1-2\gamma}\abs[ \l(\pt u(x+y)-\pt u(x)\r) f(u)(x) ]^2
dxdy.
\end{aligned}
\end{align*}
Note that
\begin{align*}
\abs[ f(u)-f(v)] \cleq \l( |u|^{2(\sigma-1)}+|v|^{2(\sigma-1)}\r)|u-v|
\quad\text{for}~u,v\in\C.
\end{align*}
By H\"older's inequality and Lemma \ref{lem:A.2},
the RHS of the above inequality is estimated by
\begin{align*}
&\cleq
\begin{aligned}[t]
&\norm[u]_{L^\infty}^{4(\sigma-1)}\norm[\pt u]_{L^\infty}^2\iint_{\T\times[-\frac{1}{2},\frac{1}{2})} 
|y|^{-1-2\gamma}\abs[ u(x+y)-u(x)]^2
dxdy
\\
&{}+\norm[f(u)]_{L^\infty}^2\iint_{\T\times[-\frac{1}{2},\frac{1}{2})} 
|y|^{-1-2\gamma}\abs[\pt u(x+y)-\pt u(x)]^2
dxdy
\end{aligned}
\\
&\cleq \norm[u]_{L^\infty}^{4(\sigma-1)}\norm[\pt u]_{L^\infty}^2\norm[D^\gamma u]_{L^2}^2+\norm[u]_{L^\infty}^{2(2\sigma-1)}\norm[D^{\gamma+1}u]_{L^2}^2.
\end{align*}
This completes the proof.
\end{proof}
By \eqref{eq:A.3}, Lemma \ref{lem:A.3}, and Sobolev's embedding, the first term in \eqref{eq:A.2} is estimated as
\begin{align*}
\abs[ {2\Im\rbra[D^s g(v_\eps)-i|v_\eps|^{2\sigma}D^s\pt v_\eps, D^sv_\eps]} ]\cleq \norm[v_\eps]_{H^1}^{2\sigma-1}\norm[ v_\eps]_{H^s}^3.
\end{align*}
Regarding the second term in \eqref{eq:A.2}, by integration by parts and Sobolev's embedding, 
\begin{align*}
-2\Im\rbra[i|v_\eps|^{2\sigma}D^s\pt v_\eps, D^sv_\eps]
&=\rbra[\pt(|v_\eps|^{2\sigma}), |D^sv_\eps|^2]
\\
&\cleq\norm[v_\eps]_{L^\infty}^{2\sigma-1}\norm[\pt v_\eps]_{L^\infty} \norm[D^s v_\eps]_{L^2}^2
\cleq  \norm[v_\eps]_{H^1}^{2\sigma-1}\norm[ v_\eps]_{H^s}^3.
\end{align*}
Gathering these estimates, we obtain
\begin{align*}
\frac{d}{dt}\norm[D^su_\eps]_{L^2}^2\cleq \norm[v_\eps]_{H^1}^{2\sigma-1}\norm[ v_\eps]_{H^s}^3\cleq\norm[ u_\eps]_{H^s}^{2\sigma+2}.
\end{align*}
Combined with conservation of the $L^2$ norm, this yields 
\begin{align*}
\frac{d}{dt}\norm[u_\eps]_{H^s}^2\cleq \norm[ u_\eps]_{H^s}^{2\sigma+2}.
\end{align*}
Therefore, by a simple differential inequality, there exists $c>0$ independent of $\eps\in(0,1)$ such that
\begin{align*}
\norm[u_\eps(t)]_{H^s}^2\leq
\l( \frac{1}{ \norm[u_\eps(0)]_{H^s}^{-2\sigma} -ct }\r)^{1/\sigma}
\leq\l( \frac{1}{M^{-2\sigma}-ct } \r)^{1/\sigma}.
\end{align*}
Hence, \eqref{eq:3.13} follows by choosing $T(M)=M^{-2\sigma}/(2c)$.

\section*{Acknowledgments}

N.V. and M.H. are supported by the Italian MIUR PRIN project 2020XB3EFL and M.H. is also supported by JSPS KAKENHI Grant Number JP22K20337.
T.O. is supported by JSPS KAKENHI Grant Numbers 18KK073 and 24H00024.
%



\end{document}